\newcommand{\seg}[1]{[#1]}
\newcommand{\Gr}{\operatorname{Grass}}
\newcommand{\Ext}{\operatorname{Ext}}
\newcommand{\ext}[4]{\operatorname{Ext}^{#1}_{#2}(#3,#4)}
\newcommand{\gext}[5]{\operatorname{Ext}^{#1}_{#2}(#3,#4){}^{#5}}
\newcommand{\gb}[4]{\operatorname{\beta}_{#1,#2}^{#3}(#4)}
\newcommand{\tor}[4][{}]{\operatorname{Tor}_{#1}^{#2}(#3,#4)}
\newcommand{\gtor}[5]{\operatorname{Tor}_{#1}^{#2}(#3,#4)_{#5}}
\newcommand{\Hom}{\operatorname{Hom}}
\newcommand{\mat}[2]{\operatorname{M}_{#2\times#1}(k)}
\newcommand{\K}{\boldsymbol{K}}
\newcommand{\LL}{\boldsymbol{L}}
\newcommand{\LP}{\boldsymbol{P}}
\newcommand{\Coker}{\operatorname{Coker}}
\newcommand{\rank}{\operatorname{rank}}
\newcommand{\edim}{\operatorname{edim}}
\newcommand{\depth}{\operatorname{depth}}
\newcommand{\shift}{\mathsf\Sigma}
\newcommand{\col}{\colon}
\newcommand{\ges}{\geqslant}
\newcommand{\Ker}{\operatorname{Ker}}
\newcommand{\BZ}{{\mathbb Z}}
\newcommand{\fm}{{\mathfrak m}}
\newcommand{\xra}{\xrightarrow}
\newcommand{\lra}{\longrightarrow}
\newcommand{\bm}{\begin{matrix}}
\newcommand{\dm}{\end{matrix}}
\theoremstyle{plain}
\newtheorem{itheorem}{Theorem}
\newtheorem{theorem}{Theorem}[section]
\newtheorem{proposition}[theorem]{Proposition}
\newtheorem{lemma}[theorem]{Lemma}
\newtheorem{corollary}[theorem]{Corollary}
\theoremstyle{definition}
\newtheorem{chunk}[theorem]{}
\newtheorem{example}[theorem]{Example}
\theoremstyle{remark}
\newtheorem{remark}[theorem]{Remark}
\numberwithin{equation}{theorem}
\numberwithin{subchunk}{theorem}
\newcommand{\indeg}{\operatorname{inf}}
\newcommand{\eps}{\varepsilon}
\newcommand{\dd}{\partial}
\newcommand{\hilb}[2]{H_{#1}(#2)}
\newcommand{\po}[3][R]{P^{#1}_{#2}(#3)}
\newcommand{\Po}[3]{P^{#1}_{#2}(#3)}
\newcommand{\bss}{{\boldsymbol s}}
\begin{document}

\title[Koszul modules]
{Short Koszul modules}

\author[L.~L.~Avramov]{Luchezar L.~Avramov}
\address{Luchezar L.~Avramov\\ Department of Mathematics\\
   University of Nebraska\\ Lincoln\\ NE 68588\\ U.S.A.}
     \email{avramov@math.unl.edu}

\author[S.~B.~Iyengar]{Srikanth B.~Iyengar}
\address{Srikanth B.~Iyengar\\ Department of Mathematics\\
   University of Nebraska\\ Lincoln\\ NE 68588\\ U.S.A.}
     \email{iyengar@math.unl.edu}

\author[L.~M.~\c{S}ega]{Liana M.~\c{S}ega}
\address{Liana M.~\c{S}ega\\ Department of Mathematics and Statistics\\
   University of Missouri\\ \linebreak Kansas City\\ MO 64110\\ U.S.A.}
     \email{segal@umkc.edu}

\dedicatory{To Ralf Fr\"oberg, on his 65th birthday}
\subjclass[2000]{13D02 (primary), 13A02, 13D07 (secondary)}
\keywords{Koszul algebras, Koszul modules}

\thanks{Research partly supported by NSF grants DMS-0803082 (LLA) and
DMS-0903493 (SBI)}

 \begin{abstract}
This article is concerned with graded modules $M$ with linear resolutions
over a standard graded algebra $R$. It is proved that if such an $M$
has Hilbert series $H_M(s)$ of the
form $ps^d+qs^{d+1}$, then the algebra $R$ is Koszul; if, in
addition, $M$ has constant Betti numbers, then $H_R(s)=1+es+(e-1)s^{2}$.
When $H_R(s)=1+es+rs^{2}$ with $r\leq e-1$, and $R$ is Gorenstein or $e=r+1\le3$,
it is proved that generic $R$-modules with $q\leq
(e-1)p$ are linear.
 \end{abstract}

\maketitle

\section*{Introduction}

We study homological properties of graded modules over a standard graded
commutative algebra $R$ over a field $k$; recall that this means that
$R_0$ equals $k$ and $R$ is generated over $k$ by finitely many elements
of degree one.

Unless $R$ is a polynomial ring, any general statement about $R$-modules
necessarily concerns modules of infinite projective dimension.  Various
attractive conjectures have been based on expectations that homological 
properties of modules of finite projective dimension extend---in appropriate 
form---to all modules.

It is remarkable that several such conjectures have been refuted
by using modules $M$, whose infinite minimal free resolution display
the simplest numerical pattern:  the graded Betti numbers $\gb ijRM$
are zero for all $j\ne i$ (that is to say, $M$ is \emph{Koszul}), and
$\gb iiRM=p$ for some $p\ge1$ and all $i\ge0$; see \cite{GP, JS1, JS2}.
Furthermore, in those examples both $R$ and $M$ have special properties:
$R$ is a \emph{Koszul algebra}, meaning that $k$ is a Koszul module,
the Hilbert series $\hilb Rs=\sum_{j\in\mathbb Z}\rank_kR_js^j$ has the
form $1+es+(e-1)s^2$, and one has $\hilb Ms=p+(e-1)ps$.

This is a striking amalgamation of structural and numerical restrictions.
The following result, extracted from Theorems \ref{thm:test} and 
\ref{thm:per}(1), shows that it is inevitable.

\begin{itheorem}
  \label{i1}
Let $R$ be a standard graded algebra and $M$ a non-zero 
$R$-module.

If\,\ $M_j=0$ for $j\ne0,1$ and $M$ is Koszul, then $R$ is a 
Koszul algebra.

If, furthermore, $\gb iiRM=p$ for some $p$ and all $i\ge0$, then
  \[
\hilb Rs=(1+s)\cdot(1+(e-1)s)
  \quad\text{and}\quad
\hilb Ms=p\cdot(1+(e-1)s)\,.
  \]
  \end{itheorem}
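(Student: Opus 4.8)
The plan is to reduce both assertions to a comparison between the graded Betti numbers of $M$ and those of the residue field $k$, exploiting the fact that $M$ lives in only two degrees. Write $e=\rank_k R_1$, $p=\rank_k M_0$ and $q=\rank_k M_1$, so that $\hilb Ms=p+qs$; note $p\ge1$, since $M$ is nonzero and, being Koszul, is generated in degree $0$ (so $\m M=M_1$). The first step is to record the short exact sequence of graded modules
\[
0\lra k(-1)^{q}\lra M\lra k^{p}\lra 0,
\]
which holds because $M_1=\m M$ is annihilated by $\m$ (as $\m M_1\subseteq M_2=0$) and $M/\m M=M_0$ is a $k$-vector space concentrated in degree $0$. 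This exhibits $M$ as an extension of trivial modules, so its homology is governed by that of $k$.

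For the first assertion I would apply $\tor[\bullet]{R}{-}{k}$ to this sequence and read off the associated long exact sequence of graded modules. In internal degree $j$ the module $\tor[i]{R}{k(-1)^q}{k}$ has rank $q\cdot\gb{i}{j-1}{R}{k}$ and $\tor[i]{R}{k^p}{k}$ has rank $p\cdot\gb{i}{j}{R}{k}$, while $\tor[i]{R}{M}{k}_j=0$ whenever $j\ne i$ by Koszulness of $M$. Hence, for every $j>i$, the two $M$-terms flanking the connecting map $\tor[i]{R}{k^p}{k}\to\tor[i-1]{R}{k(-1)^q}{k}$ both vanish, and the sequence forces
\[
p\cdot\gb{i}{j}{R}{k}=q\cdot\gb{i-1}{j-1}{R}{k}\qquad(j>i).
\]
Since $\gb{0}{d}{R}{k}=0$ for $d\ge1$, an induction on $i$ along each diagonal $j-i=d\ge1$ (using $p\ge1$) yields $\gb{i}{j}{R}{k}=0$ for all $j>i$. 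Combined with the standard vanishing $\gb{i}{j}{R}{k}=0$ for $j<i$, this says exactly that $k$ is a Koszul module, i.e.\ that $R$ is a Koszul algebra.

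For the second assertion I would pass to Hilbert series. The hypothesis $\gb{i}{i}{R}{M}=p$ for all $i\ge0$ makes the minimal free resolution of $M$ infinite and purely linear, with $i$-th term $R(-i)^{p}$, so the graded Euler characteristic gives $\hilb Ms=\bigl(\sum_{i\ge0}(-1)^i p s^i\bigr)\hilb Rs=\tfrac{p}{1+s}\,\hilb Rs$ as formal power series. Therefore $\hilb Rs=(1+s)(p+qs)/p=(1+s)\bigl(1+\tfrac qp s\bigr)$, which is already a polynomial of degree two. Comparing the coefficient of $s$ with $\hilb Rs=1+es+\cdots$ forces $q/p=e-1$, whence $\hilb Rs=(1+s)\bigl(1+(e-1)s\bigr)$ and $\hilb Ms=p+qs=p\bigl(1+(e-1)s\bigr)$, as claimed.

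The conceptual crux is the opening reduction: recognizing that the two-degree hypothesis together with Koszulness produces the extension by copies of $k$, after which the diagonal concentration of $\tor{R}{M}{k}$ collapses the long exact sequence into the single clean recursion displayed above. Once that recursion is in hand the remainder is bookkeeping---the induction eliminating the off-diagonal Betti numbers of $k$, and the elementary Hilbert-series manipulation for the numerical statement. The only points demanding care are keeping the internal gradings consistent throughout the long exact sequence, and justifying that the constant-Betti-number hypothesis genuinely makes the resolution infinite, so that $\sum_{i\ge0}(-1)^i p s^i=\tfrac{p}{1+s}$ is valid.
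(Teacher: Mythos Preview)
Your proof is correct, and your argument for the first assertion is genuinely different from---and more elementary than---the paper's. The paper proves that $R$ is Koszul (Theorem~\ref{thm:test}) by invoking a result of Roos identifying the subalgebra of $\Ext^*_R(k,k)$ generated by $\Ext^1_R(k,k)$ with $\Ker(\Ext^*_R(\varepsilon,k))$, where $\varepsilon\colon R/\fm^2\to k$; it then builds a commutative diagram from the first syzygy of $M$ to show $\Ext^i_R(\varepsilon,k)=0$. Your approach bypasses this machinery entirely: the short exact sequence $0\to k(-1)^q\to M\to k^p\to 0$ and the long exact sequence in $\Tor$ give the clean recursion $p\cdot\gb ijRk=q\cdot\gb{i-1}{j-1}Rk$ for $j>i$, from which the off-diagonal vanishing of $\gb ijRk$ follows by induction. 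This is shorter and uses nothing beyond the definition of Koszulness; the paper's route, on the other hand, establishes the slightly stronger structural statement that $\Ext^*_R(\varepsilon,k)$ vanishes, which connects to the Yoneda-algebra picture.

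For the second assertion your argument is essentially the same as the paper's (Theorem~\ref{thm:per}(1), (i)$\Rightarrow$(ii)): both derive $\hilb Rs=(1+s)\hilb Ms/p$ from the linear resolution and then match coefficients. The paper phrases this via formula~\eqref{eq:linearg}, while you compute the graded Euler characteristic directly, but the content is identical.
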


The main themes of the paper are to find conditions when such 
modules actually exist, and to establish whether they display
some ``generic'' behavior.  An important step is to identify 
a set-up where similar questions may be stated in meaningful 
terms and answers can be tested against existing examples.  

Much of the discussion is carried out in the broader framework 
of Koszul modules over Koszul algebras.  Conca, Trung, and 
Valla \cite{CTV} proved if $R$ is a Koszul algebra with $\hilb Rs= 
1+es+rs^2$, then $e^{2}\ge4r$ holds, and that \emph{generic} 
quadratic algebras $R$ satisfying this inequality are Koszul.
 
To analyze the restrictions imposed on $M$ by Theorem~\ref{i1},
we fix a Koszul algebra $R$ with $\hilb Rs=1+es+rs^2$ and use 
\emph{multiplication tables} to parametrize the $R$-modules
with underlying vector space $k^p\oplus k^q(-1)$; see Section
\ref{sec:parameters}.  This identifies such modules with the 
points of the affine space $\mat q{ep}$ of $ep\times q$ matrices 
with elements in $k$, equipped with the Zariski topology.

We study the following questions concerning the subset
 $\LL_{p,q}(R)\subseteq\mat q{ep}$ corresponding to Koszul
$R$-modules:  \emph{When is $\LL_{p,q}(R)$ non-empty?  When 
is its interior non-empty?} Recall that, in a topological space, the
\emph{interior} of a subset $X$ is the largest open set contained 
in $X$; in $\mat q{ep}$ every subset with non-empty interior is 
\emph{dense}, because affine spaces are irreducible.

It is not hard to show that $2q\le(e+\sqrt{e^2-4r})p$
is a necessary condition for $\LL_{p,q}$ to  be non-empty; see
Corollary~\ref{n-injective}.   To establish sufficient
conditions, we assume that $e\ge r+1$ holds.  Conca \cite{Co} proved
that, generically, algebras $R$ satisfying this inequality contain
an element $x\in R_1$ with $x^2=0$ and $xR_1=R_2$.  In an earlier
paper,~\cite{AIS}, we called such an $x$ a \emph{Conca generator}
of\,\ $R$ and demonstrated that the existence of one impacts
the structure of the minimal free resolutions of every $R$-module.  
The results of \cite{AIS} are widely used here.

The following statement is condensed from Propositions \ref{prop:(e-1)p},
\ref{prop:e-1}, and \ref{prop:(e-r)p}.  Its proof depends on the study,
in Section~\ref{sec:open}, of the loci $\LL_{p,q}^m(R)$ of modules whose
minimal free resolution is linear for the first $m$ steps.

\begin{itheorem}
\label{i2}
Let $R$ be a standard graded algebra with a Conca generator.

For $p,q\in\mathbb{N}$ the linear locus $\LL_{p,q}(R)$ of\,\ $\mat
q{ep}$ is not empty when $q\le (e-1)p$, and has a non-empty interior
when $q\le\max\{e-1\,,\,(e-r)p\}$.
 \end{itheorem}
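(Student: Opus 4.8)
The plan is to exploit the fact that $R_j=0$ for $j\ge 3$. A module $M$ with $M_j=0$ for $j\ne 0,1$ is determined by the multiplication $\mu\colon R_1\otimes_k M_0\to M_1$, and any $\mu$ is admissible since $\m^2M=0$ automatically. If $\mu$ is surjective (forced by Koszulness, as $\beta_{0,1}^R(M)=\dim M_1-\rank\mu$), then the first syzygy $\Syz_1 M\subseteq R\otimes_k M_0$ again vanishes outside two degrees: $(\Syz_1 M)_1=\Ker\mu$ and $(\Syz_1 M)_2=R_2\otimes_k M_0$, with $(\Syz_1M)_j=0$ for $j\ge 3$. Thus, after a shift, $\Syz_1 M$ is a module of the same kind with parameters $(p',q')=(ep-q,\,rp)$ and multiplication $\mu'$ equal to the restriction of $R_1\otimes(R_1\otimes M_0)\to R_2\otimes M_0$ to $R_1\otimes\Ker\mu$. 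Iterating, I would record that $M$ is Koszul if and only if the induced maps $\mu^{(i)}$ are all surjective, where the parameters evolve by $(p,q)\mapsto(ep-q,\,rp)$, and that $\LL^m_{p,q}(R)$ is governed by surjectivity of the first few $\mu^{(i)}$. This already explains the necessary inequality $2q\le(e+\sqrt{e^2-4r})p$ as the condition that the recursively defined parameters stay non-negative for all $i$, the roots of $t^2-et+r$ being $(e\pm\sqrt{e^2-4r})/2$.

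\textbf{Non-emptiness for $q\le(e-1)p$.} Here I would produce linear modules by hand using the Conca generator $x$. Since $xR_1=R_2$ and $x^2=0$, the algebra $S=R/xR$ has $\m_S^2=0$ and $S_1$ of dimension $e-1$; over $S$ each cyclic quotient $S/W$ by a subspace $W\subseteq S_1$ is Koszul, with degree-one part of dimension $(e-1)-\dim W$, realizing every value $0,\dots,e-1$. Taking a direct sum of $p$ such cyclic modules whose degree-one dimensions $j_1,\dots,j_p$ sum to $q$, which is possible exactly when $q\le(e-1)p$, yields a module with the prescribed $(p,q)$ that is annihilated by $x$; it is Koszul over $R$ by the results of \cite{AIS} on Conca generators. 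Concretely these are the $\mu$ with $xM=0$, so that $x\otimes M_0\subseteq\Ker\mu$ and every $\mu^{(i)}$ is surjective at once. This gives $\LL_{p,q}(R)\ne\varnothing$.

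\textbf{Non-empty interior for $q\le\max\{e-1,(e-r)p\}$.} Each $\LL^m_{p,q}(R)$ is open, since surjectivity of $\mu^{(i)}$ is a maximal-rank condition, equivalently the vanishing of finitely many graded Betti numbers, which is open by upper semicontinuity in the flat family $\mat q{ep}$. The first stage is cheap: surjectivity of $\mu'$ means $R_1\cdot\Ker\mu=R_2\otimes_k M_0$, an open condition that is non-empty as soon as $\Ker\mu$ can contain the $p$-dimensional space $x\otimes M_0$, i.e. as soon as $ep-q\ge p$; thus $\LL^1_{p,q}(R)$ has non-empty interior for $q\le(e-1)p$. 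The substantive point is to propagate surjectivity through the recursion for a \emph{generic} $\mu$: the induced map $\mu'$ is not an independent generic map but the structured multiplication restricted to $\Ker\mu$, so I expect genericity to degrade as $i$ grows. I would control this with the Conca generator, tracking the subspace $\Ker(x\colon R_1\to R_2)$ of dimension $e-r$, which dictates how much of $R_2\otimes_k M_0$ is forced to lie in the image at each stage; persistence of generic surjectivity along the whole recursion holds precisely while $q\le(e-r)p$, with the alternative $q\le e-1$ covering small $p$, where the recursion terminates after boundedly many steps and a direct argument applies.

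\textbf{The main obstacle.} Openness and non-emptiness of $\LL^1_{p,q}(R)$ are routine, and one linear step is inexpensive; the difficulty is that Koszulness is an \emph{infinite} condition, so I must show that for generic $\mu$ \emph{every} induced $\mu^{(i)}$ remains surjective. Because $\mu^{(i)}$ is determined by $\mu$ rather than chosen freely, the stagewise bound $q_i\le(e-1)p_i$ is not transparently inherited, and the honest threshold at which the Conca generator still forces surjectivity at every stage is exactly what pins the interior to $\max\{e-1,(e-r)p\}$. The structural results of \cite{AIS}, describing how multiplication by $x$ governs each successive syzygy, are the tool I would rely on to upgrade the one-step openness into all-step genericity.
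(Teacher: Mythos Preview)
Your non-emptiness argument for $q\le(e-1)p$ is correct and is essentially the paper's: exhibit modules annihilated by the Conca generator and invoke \cite{AIS}.

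The interior claim has a genuine gap, and you name it yourself. Your recursion $(p,q)\mapsto(ep-q,rp)$ does not terminate, so to conclude you must show that a \emph{single} open condition on $\mu$ forces surjectivity of \emph{every} $\mu^{(i)}$. You assert that ``persistence of generic surjectivity along the whole recursion holds precisely while $q\le(e-r)p$,'' but supply no mechanism; and the remark that for $q\le e-1$ ``the recursion terminates after boundedly many steps'' is simply false---the parameters grow without bound. Tracking the subspace $\Ker(x\colon R_1\to R_2)$ is a reasonable heuristic, but nothing in your outline converts it into a proof that an infinite chain of rank conditions holds on a fixed open set. The results of \cite{AIS} you invoke describe resolutions of individual modules; they do not, as stated, deliver such a propagation statement for generic $\mu$.

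The paper sidesteps the infinite recursion altogether, by two unrelated devices. For $q\le(e-r)p$ it passes to the trivial extension $S=R\ltimes M$: taking $C$ in the open set $\LL^1_{p,q}(R)\cap\mat q{ep}(\bss)$ with $\bss$ the $q$ \emph{largest} index pairs, one writes down quadratic generators for the defining ideal of $S$ and checks, in reverse degree-lex order with $y_1>\cdots>y_p>x_1>\cdots>x_e$, that their initial terms already cut out an algebra with the correct Hilbert series (the inequality $q\le(e-r)p$ is exactly what kills all cubic monomials in the initial ideal). Thus $S$ has a quadratic Gr\"obner basis, hence is Koszul, hence $M$ is Koszul by Proposition~\ref{koszul-rings-modules}. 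For $q\le e-1$ the paper first reduces to $r=e-1$ via a Golod surjection $R'\to R$ (Proposition~\ref{prop:homs}), then reduces via the monotonicity Theorem~\ref{thm:functorial} to $(p,q)=(1,e-1)$, and finally observes (Proposition~\ref{prop:per}) that any $2$-step linear cyclic module with Hilbert series $1+(e-1)s$ is automatically periodic of period~$2$, so $\LL_{1,e-1}(R)=\LL^2_{1,e-1}(R)$ is open and non-empty. In both cases the infinite Koszul condition is collapsed to a finite one by a structural argument you do not have.
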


In the motivating case when $\hilb Rs=1+es+(e-1)s^2$, Theorem \ref{i2}
shows that \emph{generically} $\LL_{p,(e-1)p}(R)$ is not empty.  Computer
experiments suggest that even its interior may be non-empty.  Indeed,
letting $R$ be a quotient of $k[x_1,\dots,x_e]$ by $\binom{e+1}2-(e-1)$
``random'' quadratic forms and $M$ an $R$-module presented by a ``random"
$p\times p$ matrix of linear forms in $x_1,\dots,x_e$\,, one gets $\gb
ijR{M}=0$ for $j\ne i$ and $\gb iiR{M}=p$ with unsettling frequency and
for ``large'' values of\,\ $i$.

In the next theorem, contained in Propositions \ref{prop:gor} and
\ref{prop:per3}, we describe algebras with non-empty open sets of linear
modules, under mild hypotheses on $k$.

\begin{itheorem}
\label{i3}
Let $R$ be a short standard graded $k$-algebra.

If $R$ is Gorenstein, then for all pairs $(p,q)$ with $p\ge1$ the set 
$\LL_{p,q}(R)$ is open in $\mat q{ep}$; it is not empty when $q\leq (e-1)p$
and there exists a non-zero element $x\in R_1$ with $x^2=0$ (in particular, when 
$k$ is algebraically closed).

If $R$ is quadratic with $\hilb Rs=1+es+(e-1)s^2$ and $e\leq 3$, then for all
$p\ge1$ the set $\LL_{p,(e-1)p}(R)$ is open in $\mat{(e-1)p}{ep}$, and is
not empty if $k$ is infinite.
  \end{itheorem}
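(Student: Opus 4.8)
The plan is to derive every openness assertion from a single principle: \emph{finite determination of linearity}. Concretely, for each $R$ in the statement I would produce an integer $m_0$, depending only on $R$ together with $p,q$, such that $\LL_{p,q}^{m_0}(R)=\LL_{p,q}(R)$. Granting this, openness is immediate: each locus $\LL_{p,q}^{m}(R)$ is open in $\mat q{ep}$ by the results of Section~\ref{sec:open}, and the potentially troublesome infinite intersection $\LL_{p,q}(R)=\bigcap_{m\ge1}\LL_{p,q}^{m}(R)$ now collapses to the single open set $\LL_{p,q}^{m_0}(R)$. Thus the first and principal task is to produce $m_0$.

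To obtain finite determination I would use that $R$ is \emph{short}, so $\fm^3=0$, and feed this into the structure theory of minimal resolutions over short rings developed in \cite{AIS}. The aim is a propagation statement: if the resolution of $M$ is linear through step $m_0$, then the rigidity of syzygies over a short ring forbids any nonlinear strand at a later step, whence $M$ is Koszul. In the Gorenstein case graded self-duality should pin $m_0$ to a small constant, since high syzygies are totally reflexive and duality trades a late nonlinear strand for an early one that the first few linear steps already exclude. For the quadratic algebras with $e\le3$ I would instead lean on the explicit description of syzygies underlying Proposition~\ref{prop:per3}, sharp enough to reduce Koszulness to the vanishing of finitely many $\gb ijRM$. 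This propagation is the crux and the hardest step: Betti numbers here may grow, no naive periodicity is available, and one must genuinely control $\Syz_n M$ for \emph{all} $n$ rather than merely bound a regularity.

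For non-emptiness in the Gorenstein case, note first that a nonzero $x\in R_1$ with $x^2=0$ is automatically a Conca generator. Indeed, a short Gorenstein algebra has its socle concentrated in degree $2$, so $x$ is not a socle element and $xR_1\ne0$; as $\dim_kR_2=1$ this forces $xR_1=R_2$. Theorem~\ref{i2} then gives $\LL_{p,q}(R)\ne\varnothing$ whenever $q\le(e-1)p$. When $k=\ov k$ such an $x$ exists for $e\ge2$: the squaring map $x\mapsto x^2$ is a quadratic form on $R_1\cong k^e$ whose polarization is the perfect pairing $R_1\times R_1\to R_2\cong k$, and over an algebraically closed field such a form in $e\ge2$ variables has a nonzero isotropic vector; the case $e\le1$ is immediate.

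Finally, for $\LL_{p,(e-1)p}(R)$ with $k$ infinite in the quadratic $e\le3$ case, I would argue by descent from $\ov R=R\otimes_k\ov k$. Over $\ov k$ the low-embedding-dimension structure theory (the same input feeding Proposition~\ref{prop:per3}) produces a Conca generator of $\ov R$, so Theorem~\ref{i2} applies with $q=(e-1)p$ at the boundary of its range and gives $\LL_{p,(e-1)p}(\ov R)\ne\varnothing$. Because finite determination and the rank conditions cutting out the linear locus are defined over $k$ and commute with base change, $\LL_{p,(e-1)p}(R)$ is the set of $k$-points of a non-empty open subscheme of $\mat{(e-1)p}{ep}$; since $k$ is infinite, the $k$-rational points are dense in affine space and therefore meet this open set, yielding an actual linear $R$-module. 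Securing the Conca generator over $\ov k$ for \emph{every} such algebra, and not only the generic one, is the delicate point here, and is precisely what the $e\le3$ hypothesis is there to guarantee.
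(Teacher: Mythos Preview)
Your Gorenstein argument is close in spirit to the paper's, but the mechanism you describe is not quite the one that works. The paper does not argue that duality ``trades a late nonlinear strand for an early one''; rather, it invokes the classification from \cite[4.6]{AIS}: over a short Gorenstein algebra every indecomposable non-Koszul module is isomorphic to some $M_{(i)}=\Hom_R(\Omega^i_R(k),R)(1-i)$, and one computes $\hilb{M_{(i)}}s=b_{i-1}+b_is$ with $b_i=\beta^R_i(k)$ strictly increasing, while $M_{(i)}$ is exactly $(i-1)$-step linear. One then picks $m_0$ with $b_i>q$ for all $i>m_0$; any non-Koszul module in $\LL^{m_0}_{p,q}(R)$ would force a summand $M_{(i)}$ with $i>m_0$, contradicting $b_i\le q$. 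In particular $m_0$ depends on $q$ and is not a ``small constant'' as you predict. Your non-emptiness argument via Conca generators matches the paper.

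The $e\le3$ case is where there is a genuine gap. The paper does \emph{not} proceed via finite determination over $R$, nor via a Conca generator over $\ov k$. Instead it proves the structural fact $R\cong Q/(g)$ with $Q$ Koszul and $g\in Q_2$ a non-zero-divisor: for $e=2$ this is immediate from $R\cong k[x,y]/(f,g)$; for $e=3$ it uses the Buchsbaum--Eisenbud structure theorem for almost complete intersections of codimension $3$ and type $2$, and the hypothesis that $k$ is infinite enters precisely here, to put the defining quadrics in general position. With $Q$ in hand, Corollary~\ref{cor:per} gives $\LL_{p,(e-1)p}(R)=\LL^2_{p,(e-1)p}(Q)$ (for $e=3$), which is open and non-empty. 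Your alternative route has two unfilled steps: you give no mechanism for finite determination when $e=3$ and $R$ is not Gorenstein (appealing to ``the syzygies underlying Proposition~\ref{prop:per3}'' is circular, since that proposition \emph{is} the statement at hand); and the existence of a Conca generator of $\ov R$ for \emph{every} quadratic algebra with $\hilb Rs=1+3s+2s^2$, which you flag as delicate, is not established---Conca's result \cite{Co} covers only the generic case. Even granting such an $x$, a Conca generator yields $\LL_{p,(e-1)p}(\ov R)\ne\varnothing$ but not its openness for $p>1$, so the descent you sketch would still be incomplete.
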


For $R$ as in the last statement of Theorem \ref{i3}, the $R$-modules in
$\LL_{p,(e-1)p}(R)$ are described as those that are \emph{periodic of
period $2$}, see Section \ref{sec:per}.  Over rings with $e\ge4$, these 
classes may be distinct, and new ones appear; see \cite{GP, JS2}.

The generic behavior of Koszul modules with constant Betti numbers over a
generic Koszul algebra $R$ with $\hilb Rs=1+4s+3s^2$ still is a mystery.

\section*{Notation}
  \label{sec:notation}

Let \emph{$(R,\fm,k)$ be a graded algebra}; in this paper, the phrase
introduces the following hypotheses and notation: $k$ is a field,
$R=\oplus_{j\in\BZ}R_j$ is a commutative graded $k$-algebra 
finitely generated over $R_0=k$, $R_j=0$ for $j<0$, and 
$\fm=\oplus_{j\ges1}R_j$. 

Let $M=\oplus_{j\in\BZ} M_j$ be a graded $R$-module, here 
always assumed finite. For every $d\in\BZ$, we let $M(d)$ 
denote the graded $R$-modules $M(d)_j=M_{j+d}$ for each $j$.  Set:
  \begin{align*}
\indeg M&=\inf\{j\in\BZ\mid M_j\ne0\}\,,
  \\
\hilb Ms&=\sum _{j=\indeg M}^\infty(\rank_k M_j)s^j\in\BZ(\!(s)\!)\,.
  \end{align*}
The formal Laurent series above is the \emph{Hilbert series} of\,\ $M$.

It is implicitly assumed that homomorphisms of graded $R$-modules
preserve degrees.  In this category, the free modules are isomorphic
to direct sums of copies of\,\ $R(d)$, with various $d$.  Every graded
$R$-module $M$ has a \emph{minimal free resolution}
$$
F=\quad\cdots \to F_n\xra{\dd_n} F_{n-1}\to \cdots\to F_1\xra{\dd_1} F_0\to 0  
$$
with each $F_n$ finite free and $\dd_n(F_n)\subseteq \fm F_{n-1}$.  
Computing with it, one gets 
  \[
\ext iRMk=\bigoplus_{j\in\BZ}\gext iRMkj=\Hom_k((F_i/\fm F_i)_j,k)
  \quad\text{for each}\quad i\ge0\,.
  \]

Composition products turn 
$\mathcal E=\bigoplus_{i\ges0,j\ges0}\Ext^i_R(k,k)^j$ 
into a bigraded $k$-algebra, and 
$\mathcal M=\bigoplus_{i\ges0,j\in\BZ}\Ext^i_R(M,k)^j$ 
into a bigraded $\mathcal E$-module.

The $(i,j)$th {\it graded Betti number} of\,\ $M$ is defined to be
  \[
\gb ijRM=\rank_k\gext iRMkj\,.
  \] 
The \emph{graded Poincar\'e series} of\,\ $M$ over $R$ is the formal
power series
   \begin{equation*}
  \label{2-Poincare}
\po M{s,t}=\sum_{i\in\mathbb N,j\in\BZ}\gb ijRM\,s^jt^i \in\BZ[s^{\pm1}][\![t]\!]\,.
  \end{equation*}
We also use non-graded versions of these notions, namely
  \[
\beta_i^R(M)=\sum_{j\in\BZ}\gb ijRM
  \quad\text{and}\quad
\po M{t}=\sum_{i\in\mathbb N}\beta_i^R(M)t^i=\po M{1,t}\in\BZ[\![t]\!]\,.
  \]

\section{Short linear modules and Koszul algebras}

In this section $(R,\fm,k)$ is a graded algebra.  We recall the 
definitions of the algebras and modules of principal interest
for this paper; see \cite{Fr0} or \cite{PP} for details.

  \begin{chunk}
    \label{ch:linear}
We say that an $R$-module $M$ is {\it linear} if it is graded and $\gb
ijRM=0$ holds for all $j-i\ne d$ and some $d\in\BZ$; in case $M\ne0$ one
has $d=\indeg M$, and $M$ is generated in degree~$d$.  It is well-known
that $M$ is linear if and only it satisfies
  \begin{equation}
 \label{eq:linearg}
\po{M}{s,t}\cdot\hilb{R}{-st}=(-t)^{-d}\hilb{ M}{-st}\,.
  \end{equation}

A linear module $M$ with $\indeg M=0$ is also called a \emph{Koszul
module}.
  \end{chunk}

  \begin{chunk}
    \label{ch:koszul}
The algebra $R$ is {\it Koszul} if\,\ $k$ is a linear $R$-module; the
equalities $\gb 0jR{\fm}=\gb 1jRk$ show that then $R$ is \emph{standard};
that is, it is generated over $k$ by elements of degree $1$. It is
well-known, see \cite[1.16]{BF}, that $R$ is Koszul if and only if
it satisfies
  \begin{equation}
 \label{eq:koszul}
\po{k}{t}\cdot\hilb{R}{-t}=1\,,
  \end{equation}
if and only if the $k$-algebra $\mathcal E$ is generated by $\mathcal
E^{1,1}$; see \cite[Thm.\,1.2]{Lo} or \cite[Ch.\,2,\,\S1]{PP}.
  \end{chunk}

We frequently refer to the following criterion:

  \begin{chunk}
    \label{ch:square}
If $Q$ is a standard graded $k$-algebra and $g$ is a 
non-zero-divisor in $Q_1$ or $Q_2$, then $Q$ and $Q/(g)$ are Koszul 
simultaneously, see \cite[Thm.\,4(e)(iv)]{BF} or \cite[6.3]{PP}.
  \end{chunk}

To link linearity of\,\ $M$ to linearity of\,\ $k$, we recall a
construction.

\begin{chunk}
\label{trivial-extension} 
Let $M\ne0$ be a graded $R$-module, and set $d=\indeg M$.
The \emph{trivial extension} $R\ltimes M$ has $R\oplus M(d-1)$ 
as graded $k$-spaces, and
  \[
(r_1,m_1)\cdot (r_2,m_2)=(r_1r_2,r_1m_2+r_2m_1)
\quad\text{for all $r_j\in R$ and
$m_j\in M(d-1)$}\,.
  \]
Setting $\fm\ltimes M=\fm\oplus M(d-1)$, we get  
a graded $k$-algebra $(R\ltimes M,\fm\ltimes M,k)$.

One has $R\ltimes M=R\ltimes(M(n))$ for every
$n\in\BZ$, and the following equality holds:
\begin{equation}
  \label{trivial-hilbert}
  \hilb {R\ltimes M}s=\hilb{R}{s}+s^{1-d}\hilb{M}{s}\,.
\end{equation}

The graded version of a result of Gulliksen, see \cite[Thm.\,2]{Gu}, reads
\begin{equation}
  \label{trivial-poincare-bigraded}
\Po{R\ltimes M}k{s,t}=\frac{\po k{s,t}}{1-s^{1-d}t\po M{s,t}}
\end{equation}
 \end{chunk}

The implication (i)$\implies$(ii) in the next proposition is obtained in 
\cite[Ch.\,2, 5.5]{PP} by a different argument, which works also in a
non-commutative situation.

  \begin{proposition}
   \label{koszul-rings-modules}
Let $(R,\fm,k)$ be a graded algebra and $M$ a graded $R$-module. 

The following statements then are equivalent:

\begin{enumerate}[\rm\quad(i)]
\item $R$ is a Koszul algebra and $M$ is linear.
\item $R\ltimes M$ is a Koszul algebra. 
\item $R$ is a Koszul algebra, and for some $d\in\BZ$ one has
  \[
\po{M}{t}\cdot\hilb{R}{-t}=(-t)^{-d}\hilb{ M}{-t} \,.
   \]
\end{enumerate}
\end{proposition}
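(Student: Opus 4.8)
The plan is to set $d=\indeg M$ and to establish the equivalences through the cycle (i)$\Rightarrow$(iii)$\Rightarrow$(ii)$\Rightarrow$(i). Throughout I would lean on the numerical criteria of \ref{ch:koszul} and \ref{ch:linear}: applying \eqref{eq:linearg} to the module $k$, for which $\indeg k=0$ and $\hilb{k}{-st}=1$, shows that a graded algebra $A$ is Koszul if and only if $\po{k}{s,t}\cdot\hilb{A}{-st}=1$, which is the same as saying $\gb ijAk=0$ for all $i\neq j$. I note also that \eqref{eq:koszul} already forces standardness (compare the coefficients of $t^1$ on the two sides), so it may be invoked without a separate check. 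With this in hand, (i)$\Rightarrow$(iii) is immediate: linearity of $M$ is the bigraded identity \eqref{eq:linearg}, and specializing it at $s=1$ yields the single-variable identity of (iii), while $R$ is Koszul by hypothesis.

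For (iii)$\Rightarrow$(ii) I would substitute the hypotheses into Gulliksen's formula \eqref{trivial-poincare-bigraded} and the Hilbert series formula \eqref{trivial-hilbert}, both taken at $s=1$. Using $\po{k}{t}\cdot\hilb{R}{-t}=1$ (because $R$ is Koszul) together with the identity of (iii) rewritten as $\po{k}{t}\cdot\hilb{M}{-t}=(-t)^{d}\po{M}{t}$, the product $\Po{R\ltimes M}{k}{t}\cdot\hilb{R\ltimes M}{-t}$ collapses to $1$; by \eqref{eq:koszul} this says $R\ltimes M$ is Koszul. This step is routine series arithmetic once the substitutions are arranged.

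The substance lies in (ii)$\Rightarrow$(i), which I would split in two. First, since $\po{M}{s,t}$ has nonnegative coefficients, the geometric expansion of $\bigl(1-s^{1-d}t\,\po{M}{s,t}\bigr)^{-1}$ appearing in \eqref{trivial-poincare-bigraded} has nonnegative coefficients, so $\Po{R\ltimes M}{k}{s,t}$ dominates $\po{k}{s,t}$ coefficientwise; that is, $\gb ij{R\ltimes M}k\ges\gb ijRk$ for all $i,j$. If $R\ltimes M$ is Koszul, the numbers on the left vanish off the diagonal, which forces $\gb ijRk=0$ for $i\neq j$, so $R$ is Koszul. Second, with $R$ now Koszul I would run the bigraded criterion for $R\ltimes M$ in reverse: inserting $\po{k}{s,t}\cdot\hilb{R}{-st}=1$ into $\Po{R\ltimes M}{k}{s,t}\cdot\hilb{R\ltimes M}{-st}=1$ and clearing the denominator from \eqref{trivial-poincare-bigraded} leaves $(-st)^{1-d}\po{k}{s,t}\hilb{M}{-st}=-s^{1-d}t\,\po{M}{s,t}$. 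Cancelling $s^{1-d}$ and $t$ and using $\po{k}{s,t}\cdot\hilb{R}{-st}=1$ once more turns this into exactly \eqref{eq:linearg}, so $M$ is linear and the cycle closes.

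The main obstacle is the first half of (ii)$\Rightarrow$(i): recognizing that Gulliksen's formula by itself yields the coefficientwise domination $\Po{R\ltimes M}{k}{s,t}\succeq\po{k}{s,t}$, which is precisely the mechanism that transfers the diagonal concentration of the Betti numbers of $k$ from $R\ltimes M$ down to $R$ and thereby produces the Koszulness of $R$ that the remaining formal computation requires. Everything after that is manipulation of the series identities \eqref{trivial-hilbert} and \eqref{trivial-poincare-bigraded}.
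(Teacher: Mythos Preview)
Your proof is correct and follows the same cycle (i)$\Rightarrow$(iii)$\Rightarrow$(ii)$\Rightarrow$(i) as the paper, with essentially identical arguments for the first two implications. The one genuine difference is in how you extract ``$R$ is Koszul'' from (ii): the paper uses the split homomorphisms $R\to R\ltimes M\to R$ of graded $k$-algebras to realize $\gext iRkkj$ as a direct summand of $\gext i{R\ltimes M}kkj$, forcing the off-diagonal vanishing directly at the level of Ext; you instead read off the coefficientwise inequality $\Po{R\ltimes M}{k}{s,t}\succeq\po k{s,t}$ from the geometric expansion of the denominator in \eqref{trivial-poincare-bigraded}. Both are valid: the retract argument is more structural and does not rely on Gulliksen's formula, while your argument keeps everything inside the power-series bookkeeping already in play. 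For the remaining step---deducing linearity of $M$ once $R$ is known to be Koszul---the paper solves \eqref{trivial-poincare-bigraded} for $\po M{s,t}$ and observes that the result lies in $s^d\cdot\BZ[\![st]\!]$, which is the same computation as yours, just rearranged.
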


\begin{proof}
(i)$\implies$(iii).  The desired equality is obtained from \eqref{eq:linearg} 
by setting $s=1$.

(iii)$\implies$(ii).
Comparing the orders of the formal Laurent series in (iii), one gets 
$d=\indeg M$.  In the following string of equalities, the first one comes 
from setting $s=1$ in \eqref{trivial-poincare-bigraded}, the second 
from the hypothesis, the last one from  \eqref{trivial-hilbert}. 
\begin{align*}
\po[R\ltimes M]kt&={\po kt}\cdot\frac1{1-t\po Mt}\\
&=\frac{1}{\hilb R{-t}}\cdot \frac{1}{1-t\cdot(-t)^{-d}\hilb{M}{-t}{\hilb{R}{-t}}^{-1}}\\
&=\frac{1}{\hilb{R}{-t}+(-t)^{1-d}\hilb{M}{-t}}\\
&=\frac{1}{\hilb{R\ltimes M}{-t}}\,.
\end{align*}
By \eqref{eq:koszul}, the composite equality implies that $R\ltimes M$ 
is Koszul. 

(ii)$\implies$(i).
The evident homomorphisms $R\to R\ltimes M\to R$  of graded algebras
compose to the identity.  As $\gext i?kkj$ is a functor of the ring argument, 
$\gext iRkkj$ is a direct summand of\,\ $\gext i{R\ltimes M}kkj$.  
Thus, $R$ is Koszul, so both $\po k{s,t}$ and $\po[R\ltimes M]k{s,t}$
can be written as formal power series in $st$.  The equality
$$
\po M{s,t}=
s^d\cdot\frac1{st}\bigg(1-\frac{\po k{s,t}}{\po [R\ltimes M]k{s,t}}\bigg)\,,
$$
which comes from \eqref{trivial-poincare-bigraded}, gives $\gb ijRM=0$
for $j-i\ne d$; thus, $M$ is linear.  \end{proof}

A graded $R$-module $M$ is \emph{short} if\,\ $\hilb M{s}=(p+qs)s^d$
for some $d\in\BZ$.  Koszul algebras have short linear modules,
for $k$ is one, by definition.  Conversely:

\begin{theorem}
\label{thm:test}
Let $R$ be a standard graded algebra.

If\,\ $R$ has a linear module $M\ne0$ that is short, then $R$ is Koszul.
 \end{theorem}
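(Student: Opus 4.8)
The plan is to show directly that $k$ is a linear $R$-module, i.e.\ that $R$ is Koszul, by reading this off from a short exact sequence built out of $M$; this bypasses the trivial extension entirely. After shifting we may assume $\indeg M=0$, so that $\hilb Ms=p+qs$ with $p=\rank_kM_0\ges1$, and $M_j=0$ for $j\ges2$. The first step is to exploit shortness structurally: since $M$ is generated in degree $0$ we have $\fm M=M_1$, and $\fm$ annihilates both $M_0$ and $M_1$, so each is a sum of (shifted) copies of $k$. This produces the short exact sequence of graded modules
\[
0\lra k(-1)^q\lra M\lra k^p\lra 0\,.
\]

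Next I would apply $\Tor^R_\bullet(-,k)$ and examine the long exact sequence one internal degree $j$ at a time. Writing $\beta_{i,j}=\gb ijRk$, it contains the four-term stretch
\[
\Tor^R_i(M,k)_j\lra \Tor^R_i(k^p,k)_j\xra{\ \partial\ }\Tor^R_{i-1}(k(-1)^q,k)_j\lra \Tor^R_{i-1}(M,k)_j\,,
\]
whose inner terms have dimensions $p\beta_{i,j}$ and $q\beta_{i-1,j-1}$ (the shift by $1$ in the second module is what turns degree $j$ into $\beta_{i-1,j-1}$). Because $M$ is linear with $\indeg M=0$, the flanking terms $\Tor^R_i(M,k)_j$ vanish for $j\ne i$. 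Hence for every $j>i$ \emph{both} flanking terms vanish, the connecting map $\partial$ becomes an isomorphism, and we obtain the recursion
\[
p\,\beta_{i,j}=q\,\beta_{i-1,j-1}\qquad\text{for all }j>i\ges1\,.
\]

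I would then run an induction on $i$ to conclude $\beta_{i,j}=0$ for all $j>i$. The base case $\beta_{0,j}=0$ for $j>0$ is immediate, since $F_0=R$ in the minimal resolution of $k$. For the step, if $\beta_{i-1,l}=0$ for all $l>i-1$, then any $j>i$ satisfies $j-1>i-1$, so $\beta_{i-1,j-1}=0$ and the recursion forces $p\,\beta_{i,j}=0$, hence $\beta_{i,j}=0$ as $p\ges1$. Together with the automatic lower vanishing $\beta_{i,j}=0$ for $j<i$ for the residue field of a standard graded algebra, this says precisely that $k$ is linear, i.e.\ that $R$ is Koszul.

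The computation is short once the sequence is in hand, so the delicate point---the ``hard part''---is the degree bookkeeping in the middle step: one must track the internal grading through the degree-preserving connecting map and notice that it is the \emph{strict} inequality $j>i$ that kills both neighbouring $M$-terms at once, so that $\partial$ is a genuine isomorphism rather than merely one-sided. I would also emphasize that shortness of $M$ is used essentially: it is precisely the two-step filtration $0\subset M_1\subset M$, with quotients that are sums of shifts of $k$, that yields the clean recursion; for a general linear module the analogous sequence would involve other modules and this argument would not close up.
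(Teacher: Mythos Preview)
Your argument is correct and takes a genuinely different route from the paper. The paper invokes Roos's theorem that the subalgebra of $\Ext_R^*(k,k)$ generated by $\Ext^1_R(k,k)$ equals $\Ker\Ext_R^*(\eps,k)$ for the canonical map $\eps\colon R/\fm^2\to k$, and then runs a diagram chase comparing the first syzygy $N$ of $M$ with $\fm N=\fm^2R^b$ to conclude $\Ext_R^i(\eps,k)=0$. Your approach bypasses this external input entirely: the two-step filtration $0\subset M_1\subset M$ gives the exact sequence $0\to k(-1)^q\to M\to k^p\to 0$, and the Tor long exact sequence plus the linearity of $M$ force the recursion $p\,\gb ijRk=q\,\gb{i-1}{j-1}Rk$ for $j>i$, from which the Koszul property of $k$ follows by induction. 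This is more elementary and fully self-contained; what the paper's route buys is a connection to the structure of the Yoneda algebra, which may be of independent interest but is not needed for the bare statement. One small wording wrinkle: $M_0$ is not an $R$-submodule and $\fm$ does not annihilate it inside $M$; what you actually use (and what is true) is that $M_1=M_{\ges1}$ is a submodule with $\fm M_1=0$ and that $M/M_1$ is concentrated in degree $0$, hence also killed by $\fm$. This does not affect the validity of the argument.
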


\begin{proof}
Let $\eps\col R/\fm^{2}\to k$ denote the canonical surjection.  Roos
\cite[Cor.\,1, p.\,291]{Ro} proves that $\Ker(\Ext_{R}^{*}(\eps,k))$
is the subalgebra of\,\ $\mathcal{E}=\Ext_{R}^{*}(k,k)$ generated by
$\Ext^1_R(k,k)$, so it suffices to prove that $\gext iR{\eps}kj=0$
holds for all $i,j$; see \ref{ch:koszul}.

Replacing $M$ with $M(d)$, we may assume that $M$ is Koszul.
In an exact sequence 
  \[
0\to N\to R^b\to M\to 0
  \]
of graded modules with $\fm N\subseteq R^{b}$, one has 
$\fm N=N_{\geqslant 2}=R_{\geqslant 2}^n=\fm^2R^n$ because 
$N_1$ generates $N$ and $M_2=0$. Thus, we get a commutative 
diagram with exact rows
\[
\xymatrixrowsep{2.5pc} 
\xymatrixcolsep{2pc} 
\xymatrix{
0\ar@{->}[r] &\fm N \ar@{->}[r]\ar@{->}[d] & R^{b} \ar@{->}[r]\ar@{=}[d] 
           & (R/\fm^{2})^{b} \ar@{->}[r]\ar@{->}[d] & 0 \\
0\ar@{->}[r] & N \ar@{->}[r] & R^{b} \ar@{->}[r] & M \ar@{->}[r] & 0 
}
\]
It induces the square in the following commutative diagram, where the
factorization $(R/\fm^2)^{b}\to M\to k^{b}$
of\,\ $\eps^{b}\col(R/\fm^2)^{b}\to k^{b}$ induces the triangle:
  \[
\xymatrixrowsep{1pc} 
\xymatrixcolsep{2pc} 
\xymatrix{
	&\gext i{R}{(R/\fm^{2})^b}kj \ar@{<-}[dd] \ar@{<-}[r]^-{\cong} 
	               & \gext{i-1}{R}{\fm N}kj\ar@{<-}[dd]^{0} \\
\gext i{R}{k^b}kj\ar@/^1.1pc/[ur]^-{\gext i{R}{\eps^b}kj}	\ar@/_1.1pc/[dr] &&\\
	& \gext {i}{R}Mkj \ar@{<-}[r]^-{\cong} & \gext {i-1}{R}Nkj }
  \]

One has $\Ext_{R}^{i-1}(N,k)^j=0$ for $j\ne i$ by isomorphism in the bottom 
row, and $\Ext^{i-1}_{R}(\fm N,k)^{i}=0$ because $\indeg(\fm N) =2$, so the 
vertical map on the right is zero.  Now the diagram implies 
$\Ext^{i}_{R}(\eps^{b},k)=0$, whence $\Ext^{i}_{R}(\eps,k)=0$.
  \end{proof}

We say that an algebra $R$ is \emph{short} when $R_i=0$ for $i\ge3$.

Existence of linear modules imposes numerical constraints on short algebras.

\begin{corollary}
  \label{n-injective}
Let $R$ be a standard graded with $\hilb Rs=1+es+rs^2$, and set
\[
u={(e-\sqrt{e^2-4r})/2}\quad\text{and}\quad v={(e+\sqrt{e^2-4r})/2}\,.
\]

If there exists a non-free linear $R$-module, then $R$ is Koszul and
$e^{2}\ge4r$ holds.

If $M$ is a linear $R$-module with $\hilb Ms=ps^d+qs^{d+1}$ and $p\ne0$,
then $q\le vp$.

If, furthermore, $q=vp$, then $u$ and $v$ are integers, and there is an equality
  \[
\po M{s,t}=\frac{ps^d}{1-ust}\,.
  \]
  \end{corollary}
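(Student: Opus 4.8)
The plan is to derive all three assertions from the non-negativity of graded Betti numbers, combined with Theorem~\ref{thm:test} and the identities \eqref{eq:linearg} and \eqref{eq:koszul}; throughout I write $1-et+rt^2=(1-ut)(1-vt)$, so that $u+v=e$ and $uv=r$. For the first assertion the point is to manufacture a \emph{short} linear module, so that Theorem~\ref{thm:test} applies. Given a non-free linear module $M$, after replacing $M$ by $M(\indeg M)$ I may assume it is Koszul; let $\Omega$ be the first syzygy in a minimal presentation $0\to\Omega\to R^{b}\to M\to0$. Then $\Omega\subseteq\fm R^{b}$, and since $R_i=0$ for $i\ge3$ the module $\fm R^{b}$ is concentrated in degrees $1$ and $2$; hence so is $\Omega$, which is therefore short, and it is nonzero because $M$ is not free. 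Thus Theorem~\ref{thm:test} shows $R$ is Koszul, and \eqref{eq:koszul} gives $\po{k}{t}=(1-et+rt^2)^{-1}$, whose coefficients are the non-negative integers $\beta_i^R(k)$. If $e^2<4r$, then $u,v=\sqrt r(\cos\theta\pm\sqrt{-1}\,\sin\theta)$ with $\theta\in(0,\pi)$, and de Moivre's formula gives $\beta_i^R(k)=r^{i/2}\sin((i+1)\theta)/\sin\theta$, which is negative for suitable $i$. This contradiction yields $e^2\ge4r$, so $u,v$ are real.

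For the second assertion $M$ is already short, so the same use of Theorem~\ref{thm:test} makes $R$ Koszul and forces $e^2\ge4r$ as above. Setting $s=1$ in \eqref{eq:linearg} and using $\hilb{R}{-t}=(1-ut)(1-vt)$ yields \[ \po{M}{t}=\frac{p-qt}{(1-ut)(1-vt)}\,, \] a series with non-negative coefficients and $p\ne0$. Since $0\le u\le v$ and $v>0$, it converges to the displayed rational function on the interval $[0,1/v)$, where it is therefore non-negative. If $q>vp$, then as $t\to(1/v)^-$ the numerator tends to $(pv-q)/v<0$ while the denominator tends to $0$ through positive values, so the function---and hence the series---tends to $-\infty$; as this is impossible, $q\le vp$.

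For the third assertion assume $q=vp$. A direct computation from \eqref{eq:linearg} gives $(-t)^{-d}\hilb{M}{-st}=ps^d-qs^{d+1}t=ps^d(1-vst)$, and since $\hilb{R}{-st}=(1-ust)(1-vst)$ the factor $1-vst$ cancels, leaving \[ \po{M}{s,t}=\frac{ps^d}{1-ust}\,. \] Expanding shows $\gb{i}{d+i}{R}{M}=pu^i$ and that all other graded Betti numbers of $M$ vanish. As these are non-negative integers and $p\ge1$, the number $pu=\gb{1}{d+1}{R}{M}$ lies in $\BZ$, so $u\in\mathbb Q$; being a root of the monic integer polynomial $x^2-ex+r$, it is then an integer, and so is $v=e-u$.

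I expect the only genuinely non-formal step to be the reduction in the first assertion: recognizing that the first syzygy of a non-free linear module over a short algebra is again linear and, crucially, \emph{short}, is exactly what lets one invoke Theorem~\ref{thm:test} for modules that need not themselves be short. Everything else is bookkeeping---the inequality $e^2\ge4r$ comes from the sign oscillation of $\beta_i^R(k)$, and the two bounds come from analysing $\po{M}{t}$ near its singularity $t=1/v$, the one delicate point being the degenerate case $u=v$, which the same one-sided limit handles.
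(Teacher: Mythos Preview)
Your proof is correct and follows the same overall architecture as the paper's: reduce to a short linear module, invoke Theorem~\ref{thm:test}, then read off the inequalities from the Poincar\'e series via \eqref{eq:linearg}. The reduction step---passing to the first syzygy $\Omega$, which is short because $\fm R^b$ lives in degrees $1$ and $2$---is exactly what the paper does (its sentence ``so $N$ is short'' is a slip for $\Omega_1^R(N)$). One small point you leave implicit, as does the paper, is that $\Omega$ is again linear; this is immediate from $\gb ijR\Omega=\gb{i+1}jRM$.

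Where you diverge is in the analytic bookkeeping. For $e^2\ge 4r$ the paper simply cites \cite[3.4]{CTV}, whereas you supply a direct argument via $\beta_i^R(k)=r^{i/2}\sin((i{+}1)\theta)/\sin\theta$; this is self-contained and recovers the cited fact. For $q\le vp$ the paper performs a partial-fraction decomposition of $(p-qst)/(1-ust)(1-vst)$, tracks the coefficients $a,b$ explicitly, and deduces $b\ge0$ from positivity of all Betti numbers, giving $pv=q+b(v-u)\ge q$; you instead argue analytically, observing that the power series $\po Mt$ is non-negative on $[0,1/v)$ and would tend to $-\infty$ at $t\to(1/v)^-$ if $q>vp$. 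Both approaches are short; the paper's partial fractions give slightly more, namely the exact Betti numbers in closed form, while your Pringsheim-type limit argument avoids splitting into the cases $e^2>4r$ and $e^2=4r$. For the boundary case $q=vp$ you cancel the factor $1-vst$ directly, which is equivalent to the paper's observation that $b=0$ in the partial-fraction expansion.
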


 \begin{proof}
Let $N$ be a non-free linear $R$-module, and set $j=\inf(N)$.  One then
has $0\ne\Omega^R_1(N)\subseteq\fm R^{n}(-j)$, so $N$ is short, non-zero,
and linear.  Theorem~\ref{thm:test} shows that $R$ is Koszul, and then
$e^{2}\ge4r$ holds by Conca, Trung, and Valla \cite[3.4]{CTV}.

When $\hilb Ms=ps^d+qs^{d+1}$, from formula \eqref{eq:linearg} we get
  \[
\po M{s,t}=\frac{s^d(p-qst)}{1-est+rst^2}=\frac{s^d(p-qst)}{(1-ust)(1-vst)}\,.
  \]
Prime fraction decomposition yields real numbers $a$ and $b$, such that
  \[
\po M{s,t}=
\begin{cases}
s^d\sum_{i=0}^\infty(au^{i}+bv^i)(st)^{i}
&\text{when }e^2>4r\,,
  \\
s^d\sum_{i=0}^\infty(av^i + b(i+1)v^i)(st)^{i}
&\text{when }e^2=4r\,.
  \end{cases}
  \]
As $M$ is not free, the coefficient of $s^d(st)^i$ is positive for each $i\ge0$,
hence $b\ge0$.

When $e^2>4r$ one has $a+b=p$ and $av+bu=q$, and hence 
$pv=q+b(v-u)\ge q$. When $e^2=4r$ holds, $a+b=p$ and 
$av=q$ give $pv=q+bv\ge q$.

Assume $pv=q$. In both cases one then has $b=0$, which means 
$\po M{s,t}=ps^d(1-ust)^{-1}$.
This implies that $u$ is an integer, and hence so is $v=e-u$.
 \end{proof}

We illustrate the tightness of the hypotheses in the last two results:

\begin{example} 
\label{rem:nonkoszul}
Let $k$ be a field and set $R=k[x,y]/(x^{2},y^{3})$.

The algebra $R$ satisfies $R_{i}=0$ for $i\ge 4$, and the $R$-module
$M=R/xR$ is non-free, linear, with $M_{n}=0$ for $n\ne 0,1,2$.
However, $R$ is not Koszul.  
  \end{example}

  \begin{remark} 
\label{rem:CRV}
If $V$ is a \emph{generic} $k$-subspace of codimension 
$r$ in the space of quadrics in $k[x_1,\dots,x_e]$.  By \cite[3.1]{CTV},
$e^{2}\ge4r$ implies that $k[x_1,\dots,x_e]/(V)$ is short and Koszul; the
converse also holds, due to Fr\"oberg and L\"ofwall \cite[7.1]{FL}.
  \end{remark}
  
Partial versions of the Koszul property are also of interest.

We say that an $R$-module $M$ is \emph{$m$-step linear} for some 
integer $m\ge0$ if it satisfies $\gb i{j}{R}{M}=0$ for all $j\ne i+\inf M$
with $i\le m$.  Thus, $0$-step linear means that $M$ is generated in
a single degree, and $1$-step linear means that, in addition, it has a
free presentation with a presenting matrix of linear forms.

  \begin{proposition}
 \label{lem:m-step}
Let $R$ be a Koszul algebra and $M$ an $R$-module with $\indeg M=0$.

For every non-negative integer $m$ the following conditions are equivalent.
 \begin{enumerate}[\quad\rm(i)]
  \item
$M$ is $m$-step linear.
  \item
$\po{M}{s,t}\equiv \hilb{M}{-st}\cdot\hilb{R}{-st}^{-1} \pmod{t^{m+1}}$
 \end{enumerate}
When $R$ is short, they are also equivalent to
 \begin{enumerate}[\quad\rm(i)]
  \item[\rm(iii)]
$\gb m{m+1}{R}{M}=0$
 \end{enumerate}
  \end{proposition}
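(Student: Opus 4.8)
The plan is to read (ii) as a truncation of the linearity criterion \eqref{eq:linearg}. Since $R$ is Koszul, $k$ is linear, so by \eqref{eq:koszul} the series $\hilb R{-st}$ is a unit with inverse $\po k{s,t}$; multiplying the congruence in (ii) by this unit rewrites it as
\[
\delta(s,t):=\hilb M{-st}-\po M{s,t}\cdot\hilb R{-st}\equiv 0\pmod{t^{m+1}}.
\]
By \eqref{eq:linearg} with $d=\indeg M=0$, the full equality $\delta=0$ is equivalent to linearity of $M$, so the proposition says precisely that $m$-step linearity is detected by the vanishing of $\delta$ through order $m$ in $t$.

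To prove (i)$\iff$(ii) I would compare the coefficients $\delta_b(s)$ of $t^b$ for $b\le m$. Expanding $\hilb R{-st}=\sum_l(-1)^l\rho_ls^lt^l$ with $\rho_l=\rank_kR_l$, one finds that $\delta_b(s)$ is a polynomial in $s$ built from the $\gb i{j}RM$ with $i\le b$, its $i=b$ part being exactly $-\sum_j\gb b{j}RM s^j$. I would then induct on $b$, starting from the vacuous case $b=0$: assuming $M$ is $(b-1)$-step linear, every lower block $\sum_j\gb i{j}RM s^j$ with $i<b$ is the monomial $\gb i{i}RM s^i$, and after multiplication by the factor $s^{b-i}$ coming from $\hilb R{-st}$ it collapses onto $s^b$; hence $\delta_b(s)=0$ forces $\sum_j\gb b{j}RM s^j$ to be a scalar multiple of $s^b$, i.e.\ $\gb b{j}RM=0$ for $j\ne b$, which advances the induction. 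Conversely, for $m$-step linear $M$ each $\delta_b(s)$ with $b\le m$ reduces to the single scalar identity $\rank_kM_b=\sum_i(-1)^i\rho_{b-i}\gb i{i}RM$; I would confirm this from the Euler characteristic in each internal degree of the truncated resolution $0\to Z\to F_b\to\cdots\to F_0\to M\to0$, where $Z$ is the $(b{+}1)$st syzygy, using $\indeg Z\ge b+1$ so that $Z$ is invisible in internal degrees $\le b$.

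For short $R$ the implication (i)$\Rightarrow$(iii) is immediate, and the work is in (iii)$\Rightarrow$(i). Here I would use the bigraded $\mathcal{E}$-module $\mathcal{M}=\Ext_R(M,k)$: since $\mathcal{E}$ is generated by $\mathcal{E}^{1,1}$ and acts with bidegree $(1,1)$, each linear strand $\mathcal{M}^{(n)}=\bigoplus_i\gext i{R}{M}{k}{i+n}$ is an $\mathcal{E}$-submodule, and $m$-step linearity is the vanishing of $\mathcal{M}^{(n)}_i$ for all $n\ge1$ and $i\le m$. The short structure enters twice. First, the filtration of $M$ by the semisimple subquotients $\fm^lM/\fm^{l+1}M$—each a sum of shifted copies of $k$—together with the long exact sequences in $\Ext_R(-,k)$ restricts the Betti numbers to the strands $n=0$ and $n=1$, reducing $m$-step linearity to the vanishing of $\gb i{i+1}RM$ for $i\le m$. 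Second, the same sequences exhibit $\mathcal{M}^{(1)}$ as the kernel of the $\mathcal{E}$-linear connecting map determined by the multiplication table of $M$, hence as a submodule of a free $\mathcal{E}$-module.

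The main obstacle is the final propagation step: deducing from the single equation $\gb m{m+1}RM=0$ that $\gb i{i+1}RM=0$ for all $i\le m$, i.e.\ that the homological support of $\mathcal{M}^{(1)}$ is upward closed. Because $\mathcal{M}^{(1)}$ sits inside a free $\mathcal{E}$-module and $\mathcal{E}$ is generated in degree one, this follows once $\mathcal{E}^1$ contains a central non-zero-divisor, so that multiplication by it carries a nonzero class in $\mathcal{M}^{(1)}_i$ to a nonzero class in $\mathcal{M}^{(1)}_{i+1}$. Producing such a non-zero-divisor over a short algebra is exactly the input supplied by the Conca-generator analysis of \cite{AIS}, and I expect verifying this—and pinning down the strand reduction above—to be the delicate part of the argument.
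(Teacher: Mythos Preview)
Your treatment of (i)$\iff$(ii) is correct and matches the paper's for (i)$\implies$(ii); for (ii)$\implies$(i) the paper simply notes that $\hilb M{-st}\cdot\hilb R{-st}^{-1}$ is a power series in the single variable $st$, so the congruence immediately forces $\gb ijRM=0$ for $j\ne i$ and $i\le m$, and no induction is needed.

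The gap is in (iii)$\implies$(i). Your propagation step rests on a central non-zero-divisor in $\mathcal E^1$, to be supplied by the Conca-generator results of \cite{AIS}. But the hypothesis here is only that $R$ be short and Koszul; a Conca generator is \emph{not} assumed, and short Koszul algebras without one exist (for instance, over $k=\mathbb R$ take a short Gorenstein $R$ whose pairing $R_1\times R_1\to R_2\cong\mathbb R$ is positive definite, so no nonzero $x\in R_1$ has $x^2=0$). More to the point, knowing only that $\mathcal M^{(1)}$ embeds in a free $\mathcal E$-module does not by itself force its homological support to be upward closed.

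The paper avoids this entirely. It invokes \cite[2.4, 2.5]{AIS}, valid over any short ring with no extra hypothesis, to obtain an exact sequence of bigraded $\mathcal E$-modules
\[
0\lra\mathcal F\lra\mathcal E\otimes_k(M_1)^*\lra(\mathcal E\otimes_k(M_0)^*)\oplus\shift\mathcal F\lra\mathcal M\lra0
\]
with $\mathcal F$ \emph{free} and concentrated on the diagonal. A Hilbert-series count then gives
\[
\po M{s,t}-\hilb M{-st}\cdot\hilb R{-st}^{-1}=\sum_{l\ge0}(r_l+sr_{l+1})(st)^l,\qquad r_l=\rank_k\mathcal F^{l,l},
\]
so $\gb m{m+1}RM=r_{m+1}$. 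Because $\mathcal F$ is free and $\mathcal E^i\ne0$ for every $i\ge0$, the vanishing $r_{m+1}=0$ forces $r_l=0$ for all $l\le m+1$, whence (ii). The missing ingredient in your sketch is precisely this freeness: the off-diagonal strand is not merely a submodule of a free $\mathcal E$-module but is itself (a shift of) one, and that is what makes the downward propagation automatic without any regularity assumption on $\mathcal E^1$.
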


  \begin{proof}
(i)$\implies$(ii).  By hypothesis, there is an exact sequence 
  \[
0\to L\to R(-m)^{b_m}\to\cdots\to R(-1)^{b_1}\to R^{b_0}\to M\to 0
  \]
of\,\ $R$-modules with $b_i=\gb i{i}{R}{M}$ for $i\le m$, and $L_j=0$
for $j\le m$.  It yields
  \[
\hilb Ms=\sum_{i=0}^m(-1)^{i}b_is^i\hilb Rs+(-1)^{m+1}\hilb Ls\,.
  \]
Dividing this equality by $\hilb Rs$ and replacing $s$ with $-st$, 
one gets (ii).

(ii)$\implies$(i)$\implies$(iii).  These implications hold by definition.  

(iii)$\implies$(ii).  Let $\mathcal E$ be the bigraded algebra 
$\Ext_{R}^{*}(k,k)$ and $\mathcal M$ its bigraded module 
$\Ext_{R}^{*}(M,k)$.  Write $(M_0)^*$ for the bigraded $k$-vector
space with $\Hom_k(M_0,k)$ in bidegree $(0,0)$ and $0$ elsewhere,
and $(M_1)^*$ for that with $\Hom_k(M_1,k)$ in bidegree $(1,1)$ and $0$
elsewhere.  Graded versions of \cite[2.4, 2.5]{AIS} yield
an exact sequence
  \[
0\lra
\mathcal F\lra
\mathcal E\otimes_k (M_1)^* \lra
(\mathcal E\otimes_k (M_0)^*)\oplus\shift \mathcal F\lra
\mathcal M\lra 0
  \]
of bigraded $\mathcal E$-modules, where $\mathcal F$ is free, 
and $(\shift\mathcal F)^{i,j}=\mathcal F^{i+1,j}$.

One has $\mathcal E^{i,j}=0$ for $i\ne j$, because 
$R$ is Koszul, and $\mathcal F^{i,j}=0$ for $i\ne j$, because of the 
inclusion $\mathcal F\subseteq\mathcal E\otimes_k(M_1)^*$.  Set 
$r_l=\rank_k\mathcal F^{l,l}$.  The exact sequence gives
\[
\po{M}{s,t}=\hilb M{-st}\cdot\po{k}{s,t}
+\left(1+\frac1t\right)\cdot\sum_{l=0}^\infty r_l \,(st)^l
\,.
 \]
Since $R$ is Koszul, \eqref{eq:linearg} gives $\po{k}{s,t}=H_R(-st)^{-1}$, 
so the formula above yields
\[
\po{M}{s,t}-\hilb M{-st}\cdot\hilb R{-st}^{-1}
=\sum_{l=0}^\infty(r_l+sr_{l+1})\,(st)^{l}\,.
 \]
As $\mathcal E^i\ne 0$ for $i\ge 0$ and $\mathcal F$ is a free 
$\mathcal E$-module, $r_l=0$ means $r_i=0$ for $i\le l$. 
  \end{proof}

\section{Parametrizing short modules}
  \label{sec:parameters}

In this section $R$ is a standard graded algebra with $R_1\ne0$ and
$x_1,\dots,x_e$ a fixed $k$-basis of\,\ $R_1$.  Let $p$ be a
positive integer and $q$ a non-negative one.
The goal here is to describe a convenient parameter space for
modules with Hilbert series $p+qs$.  

  \begin{chunk}
    \label{ch:spaces}
Let $\{u_n\}_{n\in{\mathbb N}}$ and $\{v_h\}_{h\in{\mathbb N}}$
be the standard bases of the vector spaces $k^{({\mathbb N})}$
and $k^{({\mathbb N})}(-1)$, respectively.  For each pair $(p,q)$
of non-negative integers, let $k^{p,q}$  denote the $k$-linear
span in $k^{({\mathbb N})}\oplus(k^{({\mathbb N})}(-1))$ of
$\{u_1,\dots,u_p\}\cup\{v_1,\dots,v_q\}$.

Note that one has $k^{p,q}\subseteq k^{p',q'}$ for $p\le p'$ and
$q\le q'$.
  \end{chunk}

  \begin{chunk}
    \label{ch:parameters}
Let $[1,p]$ denote the set $\{1,\dots,p\}$ of natural numbers with 
the natural order, and order the elements of\,\ $[1,e]\times[1,p]$ 
lexicographically:
 \begin{equation}
   \label{eq:order}
(l,n)< (l',n') \,\, \iff\,\, \text{$l<l'$ or ($l=l'$ and $n<n'$)}\,.
 \end{equation}

For $q\ge1$ we let $\mat q{ep}$ denote the set of\,\ $ep\times q$ matrices 
with entries in $k$, with rows indexed by the elements of\,\ $[1,e]\times[1,p]$ 
and columns by those of\,\ $[1,q]$.  \emph{We identify $\mat q{ep}$ with the
affine space $\mathbb{A}^{epq}_k$ over $k$, endowed with the Zariski topology};
by convention, we extend this identification to the case $q=0$.

For every subset $\bss\subseteq\seg{1,e}\times \seg{1,p}$ and $C\in\mat
q{ep}$, let $C_\bss$ denote the $|\bss|\times q$ submatrix of\,\ $C$ with
rows indexed by $\bss$; thus $C_{(l,n)}$ is the $(l,n)$th row of\,\ $C$.

When $|\bss|=q$, we form the following collection of matrices:
 \begin{equation}
   \label{eq:basic_open}
\mat q{ep}(\bss) = \{C\in\mat q{ep}\mid\det(C_\bss)\ne0\}\,.
 \end{equation}
This is a basic open subset of\,\ $\mat q{ep}$, which is empty when $q>ep$.
The subset
    \begin{equation}
  \label{eq:L0}
\LL^{0}_{p,q} = \{C\in \mat{q}{ep}\mid \rank C = q\}
   \end{equation}
is open $\mat q{ep}$, and is covered by the basic open subsets above:  
\begin{equation}
\label{eq:coverL0}
\LL^{0}_{p,q} = \bigcup_{\substack{\bss \subseteq\seg{1,e}\times \seg{1,p}\\ |\bss|=q}}\mat q{ep}(\bss)\,.
\end{equation}
  \end{chunk}

We parametrize short $R$-modules by means of their \emph{multiplication tables}.

  \begin{chunk}
    \label{ch:tables}
To each $R$-module $M$ with underlying vector space $k^{p,q}$ we 
associate the matrix $C^R=(c_{(l,n),h})$ in $\mat q{ep}$, with $(l,n)$th 
row defined by the equality
  \[
x_lu_n=\sum_{h=1}^q c_{(l,n),h}v_h
  \quad\text{for each}\quad
(l,n)\in[1,e]\times[1,p]\,.
  \]

Conversely, each matrix $C=(c_{(l,n),h})\in\mat q{ep}$ defines, through
the formula above, an action of\,\ $R$ on $k^{p,q}$ that turns it into an
$R$-module, called $R^{C}$.

The maps described above clearly are mutually inverse.  
     \end{chunk}
 
The correspondence in \ref{ch:tables} allows one to shuttle between
$R$-module structures on $k^{p,q}$ and $ep\times q$ matrices with elements
in $k$.  In particular, we identify $\LL^{0}_{p,q}$ with the set of\,\
$0$-step linear module structures on $k^{p,q}$.

Graded $R$-modules are often parametrized in terms of their \emph{minimal
presentations} over $R$.  This format is not used below, but we pause
to show that results on non-empty open loci faithfully translate between
parametrizations.

  \begin{chunk}
    \label{ch:grass}
For every matrix $B=(b_{(l,n),h'})$ in $\mat {(ep-q)}{ep}$, let
$\varkappa^B_1\col k^{ep-q}\to R_1\otimes_kk^p$ denote the homomorphism
of\,\ $k$-vector spaces given by the formula
  \[
w_{h'}\mapsto \sum_{(l,n)\in[1,e]\times[1,p]}b_{(l,n),h'}\,x_l\otimes
u_{n}\
  \quad\text{for}\quad h'=1,\dots,{ep-q}\,,
 \]
where $w_1,\dots,w_{ep-q}$ denote the standard basis of\,\ $k^{ep-q}$.

For every matrix $C=(c_{(l,n),h})$ in $\mat {q}{ep}$, let 
$\lambda^C_1\col R_1\otimes_kk^p\to k^q$ denote the 
homomorphism of\,\ $k$-vector spaces given by the formula
  \[
x_l\otimes u_{n}\mapsto
\sum_{h=1}^q c_{(l,n),h}v_h\
  \quad\text{for}\quad (l,n)\in[1,e]\times[1,p]\,.
  \]

Define an open subset of\,\ $\mat{(ep-q)}{ep}$ by setting
  \begin{align*}
\K_{p,q}^0&=\{B\in\mat{(ep-q)}{ep} \mid \rank_k(B)=ep-q\}\,.
  \end{align*}
  
The assignments $B\mapsto\operatorname{Im}(\varkappa(B))$ and
$C\mapsto\Ker(\lambda(C))$ define morphisms of algebraic varieties
to the Grassmannian of\,\ $(ep-q)$-dimensional subspaces of
$R_1\otimes_kk^p$:
 \[
\varkappa\col \K_{p,q}^0
\lra \Gr_{(ep-q)}(R_1\otimes_kk^p)
\longleftarrow \LL^0_{p,q} :\!\lambda
 \]
By construction, these maps above are morphisms of algebraic varieties. 

An important point here is that $\varkappa$ and $\lambda$ are \emph{open}.
This follows from a classical theorem of Chevalley, because
both maps are dominant (being surjective), the closed fibers of each one
have constant dimension (being isomorphic to some fixed affine space),
and Grassmann varieties are normal (being smooth).  Modern proofs of
Chevalley's Theorem are not easy to find.  Instead, we refer to a much
more general statement proved by Grothendieck in \cite[14.4.4(c)]{EGA43},
which contains the one used here; see \cite[6.15.1]{EGA42}.  We thank
Joseph Lipman for help with these references.
  \end{chunk}

Every matrix $B=(b_{(l,n),h'})$ in $\mat {(ep-q)}{ep}$ yields a 
homomorphism of graded $R$-modules 
$\varkappa^B\col R\otimes_kk^{ep-q}(-1)\to R\otimes_kk^p$, 
equal to $\varkappa^B_1$ in degree $1$.  The subsets 
  \begin{align*}
\K_{p,q}^1(R)&=\{B\in\mat{(ep-q)}{ep} \mid \text{$\varkappa^B_1$ is
injective and $\varkappa^B_2$ is surjective}\}\\
\LL_{p,q}^1(R)&=\{C\in\mat q{ep} \mid \text{ $\lambda^C_1$ is
injective} \}\,,
  \end{align*}
of\,\ $\mat{(ep-q)}{ep}$ are open.  One has $\hilb{\Coker(\varkappa^B)}s=p+qs$, 
so $\K_{p,q}^1(R)$ and $\LL_{p,q}^1(R)$ parametrize the same set of 
$R$-modules.  The parametrizations are interchangeable:

  \begin{lemma}
    \label{lem:grass}
When $k$ is an algebraically closed field and $R$ a standard graded
$k$-algebra the maps in \emph{\ref{ch:grass}} restrict to open morphisms 
of algebraic varieties
  \[
\varkappa\col \K_{p,q}^1(R)\lra\Gr_{(ep-q)}(R_1\otimes_kk^p)
\longleftarrow \LL_{p,q}^1(R) :\!\lambda
  \]
with the same image, so $U\subseteq\K_{p,q}^1(R)$ (respectively,
$U\subseteq\LL_{p,q}^1(R)$) is open or non-empty if and only
if\,\ $\lambda^{-1}\varkappa(U)\subseteq\LL_{p,q}^1(R)$
(respectively, $\varkappa^{-1}\lambda(U)\subseteq\K_{p,q}^1(R)$)~is$.$
   \end{lemma}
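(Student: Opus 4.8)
The plan is to deduce everything from the unrestricted maps
$\varkappa\col\K^0_{p,q}\to\Gr_{(ep-q)}(R_1\otimes_kk^p)$ and
$\lambda\col\LL^0_{p,q}\to\Gr_{(ep-q)}(R_1\otimes_kk^p)$ of
\ref{ch:grass}, each of which is an open, surjective morphism of
varieties; I work throughout on closed points, which correspond to
genuine $R$-modules because $k$ is algebraically closed. First I would
record that $\K^1_{p,q}(R)$ and $\LL^1_{p,q}(R)$ are open subsets of
$\K^0_{p,q}$ and $\LL^0_{p,q}$: both are open in the ambient matrix
spaces (as noted just before the lemma), and they are contained in
$\K^0_{p,q}$ and $\LL^0_{p,q}$ respectively, since injectivity of
$\varkappa^B_1$ is exactly the condition defining $\K^0_{p,q}$, while a
module with a linear presentation is generated in degree zero, which is
the condition defining $\LL^0_{p,q}$. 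Because the restriction of an open
morphism to an open subset of its source is again open,
$\varkappa|_{\K^1_{p,q}(R)}$ and $\lambda|_{\LL^1_{p,q}(R)}$ are open
morphisms into the Grassmannian.

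The heart of the matter is the assertion that these two restrictions have
the same image, and I would prove this by showing that both loci are the
full preimage of one and the same subset
\[
\mathcal W=\{W\in\Gr_{(ep-q)}(R_1\otimes_kk^p)\mid R_1\cdot W=R_2\otimes_kk^p\}\,.
\]
The key point is that the condition distinguishing $\K^1$ inside $\K^0$,
and the one distinguishing $\LL^1$ inside $\LL^0$, each depend only on the
image point of the Grassmannian. For $B\in\K^0_{p,q}$ one has
$\Ima(\varkappa^B_2)=R_1\cdot\Ima(\varkappa^B_1)=R_1\cdot\varkappa(B)$, so
$\varkappa^B_2$ is surjective precisely when $\varkappa(B)\in\mathcal W$,
whence $\K^1_{p,q}(R)=\varkappa^{-1}(\mathcal W)$. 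For $C\in\LL^0_{p,q}$
the module $R^C$ has $\hilb{R^C}s=p+qs$ with $(R^C)_2=0$, so its first
syzygy has degree-one part $\Ker(\lambda^C_1)=\lambda(C)$ and degree-two
part all of $R_2\otimes_kk^p$; thus the presentation of $R^C$ is linear
exactly when $R_1\cdot\lambda(C)=R_2\otimes_kk^p$, i.e. when
$\lambda(C)\in\mathcal W$, giving $\LL^1_{p,q}(R)=\lambda^{-1}(\mathcal
W)$. Since $\varkappa$ and $\lambda$ are surjective onto the whole
Grassmannian, each restricts to a surjection onto $\mathcal W$, which is
the common image.

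It then remains to transfer openness and non-emptiness across $\mathcal
W$, and this follows formally from $\varkappa$ and $\lambda$ being open,
continuous, surjective onto $\mathcal W$, using the identity
$\lambda(\lambda^{-1}\varkappa(U))=\varkappa(U)$, valid because
$\varkappa(U)\subseteq\mathcal W=\lambda(\LL^1_{p,q}(R))$. For
non-emptiness, $U\ne\varnothing$ iff $\varkappa(U)\ne\varnothing$ iff
$\lambda^{-1}\varkappa(U)\ne\varnothing$, the last equivalence by
surjectivity of $\lambda$. For openness, if $U$ is open then
$\varkappa(U)$ is open in the Grassmannian and hence
$\lambda^{-1}\varkappa(U)$ is open; conversely, openness of
$\lambda^{-1}\varkappa(U)$ makes $\varkappa(U)=\lambda(\lambda^{-1}\varkappa(U))$
open, and then $U$ is open because the loci in play are unions of fibers
of $\varkappa$, being preimages of Grassmannian loci cut out by
isomorphism-invariant properties of the parametrized module. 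The
symmetric statement, with $\varkappa$ and $\lambda$ exchanged, is proved
identically. I expect the main obstacle to be the image computation---seeing
that surjectivity of $\varkappa^B_2$ and linearity of the presentation of
$R^C$ encode the single intrinsic condition $R_1\cdot W=R_2\otimes_kk^p$
on the Grassmannian point---and, secondarily, the care required to check
that the openness equivalence genuinely rests on the loci being saturated
with respect to the fibers.
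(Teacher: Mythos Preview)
Your approach is correct and matches the paper's: both establish the equality of images by showing that $1$-step linearity of $R^C$ and surjectivity of $\varkappa^B_2$ each amount to the single condition $R_1\cdot W=R_2\otimes_kk^p$ on the Grassmannian point $W$, so that $\K^1_{p,q}(R)$ and $\LL^1_{p,q}(R)$ are the full preimages of your $\mathcal W$. The paper's proof is terser---it reformulates membership in $\LL^1_{p,q}(R)$ as injectivity of $\lambda^C_1$ and surjectivity of $\lambda^C_2$ for an auxiliary map $\lambda^C\col R(-1)\otimes_k\Ker(\lambda(C))\to R\otimes_kk^p$, then invokes ``comparison of definitions''---and leaves the openness/non-emptiness transfer, including the saturation issue you rightly flag, entirely implicit.
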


   \begin{proof}
Only the assertion concerning the images needs validation.  For each
matrix $C$ in $\mat{q}{ep}$, let 
$\lambda^C\col R(-1)\otimes_k\Ker(\lambda(C))\to R\otimes_kk^p$ be the
$R$-linear map, equal to $\lambda^C_1$ in degree $1$.  One has 
$C\in\LL_{p,q}^1(R)$ if and only if\,\ $\lambda^C_1$ is injective and 
$\lambda^C_2$ is surjective.  Comparison of definitions gives 
$\lambda(\LL_{p,q}^1(R))=\varkappa(\K_{p,q}^1(R))$.
  \end{proof}

\section{Linear loci}
  \label{sec:open}

Let $R$ be a standard graded $k$-algebra, and set
  \[
e=\rank_kR_1 \quad\text{and}\quad r=\rank_kR_2\,.
  \]

  \begin{chunk}
    \label{ch:full}
Fix positive integers $p$ and $q$. The \emph{linear locus} of\,\ $R$
in $\mat q{ep}$ is the subset
 \[
\LL_{p,q}(R)=\{C\in\mat q{ep} \mid \text{the $R$-module $R^C$ is Koszul}
\}\,,
 \]
where $R^C$ is the graded $R$-module defined in \ref{ch:tables}.
   \end{chunk}

Our goal is to identify conditions for $\LL_{p,q}(R)$ to have a
\emph{non-empty interior}; that is, for it to contain a non-empty
open subset.

\begin{theorem}
  \label{thm:functorial}
Let $R$ be a Koszul algebra and $p'\in[1,p]$ and $q'\in[q,ep]$ be integers.

If\,\ $\LL_{p',q}(R)$ or $\LL_{p,q'}(R)$ has a non-empty interior, then so 
does $\LL_{p,q}(R)$.
  \end{theorem}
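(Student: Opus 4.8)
The plan is to treat the two hypotheses separately, in each case producing a morphism of affine spaces that carries (an open piece of) the given locus into $\LL_{p,q}(R)$ and that is either an isomorphism onto a product or an open map, so that a non-empty open subset of $\mat q{ep}$ results. The homological engine behind both reductions is one observation: for a short exact sequence $0\to A\to B\to C\to 0$ of graded modules, the long exact sequence of $\gext i R{-}kj$ shows that if two of the three modules have $\gext i R{-}kj=0$ for $j\neq i$ in the relevant index range, then so does the third. Since $R$ is Koszul, $k$ has $\gext iRkkj\neq0$ only for $j=i$, and $k(-1)$ has $\gext iR{k(-1)}kj\neq 0$ only for $j=i+1$; this is what will feed the sandwich arguments below. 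I call this the \emph{sandwich argument}.

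First I would pass from $\LL_{p',q}(R)$ to $\LL_{p,q}(R)$ with $p'\le p$. Splitting $[1,p]=[1,p']\sqcup[p'+1,p]$ identifies $\mat q{ep}$ with $\mat q{ep'}\times\mat q{e(p-p')}$, writing $C$ as a pair $(C',D)$. I claim $C'\in\LL_{p',q}(R)$ forces $R^C$ to be Koszul for \emph{every} $D$. Indeed, a Koszul module is $0$-step linear, so $\lambda^{C'}_1$ is surjective; hence the submodule of $R^C$ generated by $u_1,\dots,u_{p'}$ is isomorphic to $R^{C'}$ and already exhausts $(R^C)_1$, yielding a short exact sequence $0\to R^{C'}\to R^C\to k^{p-p'}\to 0$. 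As $R^{C'}$ and $k^{p-p'}$ are Koszul, the sandwich argument gives that $R^C$ is Koszul. Thus $\LL_{p',q}(R)\times\mat q{e(p-p')}\subseteq\LL_{p,q}(R)$, and if $U'$ is a non-empty open subset of $\LL_{p',q}(R)$, then $U'\times\mat q{e(p-p')}$ is a non-empty open subset of $\mat q{ep}$ lying in $\LL_{p,q}(R)$.

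Next I would pass from $\LL_{p,q'}(R)$ to $\LL_{p,q}(R)$ with $q\le q'$. Let $g\col\mat{q'}{ep}\to\mat q{ep}$ be the coordinate projection retaining the first $q$ columns. For $C'\in\mat{q'}{ep}$ the module $R^{g(C')}$ is exactly the quotient $R^{C'}/W$, where $W\subseteq (R^{C'})_1$ is spanned by the last $q'-q$ basis vectors; since $(R^{C'})_2=0$, the subspace $W$ is automatically a submodule and $W\cong k^{q'-q}(-1)$. Applying the sandwich argument to $0\to W\to R^{C'}\to R^{g(C')}\to 0$, using that $\gext{i-1}R{k^{q'-q}(-1)}kj$ vanishes for $j\neq i$, shows $R^{g(C')}$ is Koszul whenever $R^{C'}$ is. Hence $g(\LL_{p,q'}(R))\subseteq\LL_{p,q}(R)$. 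Because $g$ is a coordinate projection of affine spaces it is an open map, so the image of a non-empty open subset of $\LL_{p,q'}(R)$ is a non-empty open subset of $\mat q{ep}$ contained in $\LL_{p,q}(R)$.

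The routine points—that the submodule generated by $u_1,\dots,u_{p'}$ fills $(R^C)_1$, and that $g$ is open—are straightforward. The step I expect to carry the real weight is the sandwich argument itself: verifying that the graded Ext of the two known modules vanishes off the diagonal in precisely the range needed. For the extension in the first case this is immediate, both outer terms being Koszul; for the quotient in the second case it hinges on the shift, namely that $\gext iR{k(-1)}kj\neq0$ only for $j=i+1$, which is exactly compensated by the index shift $i-1$ coming from the connecting homomorphism.
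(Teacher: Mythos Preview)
Your argument is correct and follows essentially the same route as the paper: the exact sequences $0\to R^{C'}\to R^C\to k^{p-p'}\to 0$ and $0\to k^{q'-q}(-1)\to R^{C'}\to R^{g(C')}\to 0$, together with the graded $\Ext$ long exact sequence, are precisely the content of the paper's Lemma preceding the theorem, and your product decomposition in the first case is just a repackaging of the paper's preimage $(\iota^*)^{-1}(U)$. The one small divergence is in the second case: you invoke directly that a coordinate projection of affine spaces is an open map, whereas the paper restricts $\pi_*$ to an affine slice $W=C'+\sigma(\mat q{ep})$ through a chosen point of $U'$, on which $\pi_*$ is a homeomorphism, and pushes $U'\cap W$ forward---both arguments are valid, yours being a shade quicker.
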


In the proof we use the functoriality of the correspondence in \ref{ch:tables}:

 \begin{chunk}
  \label{ch:functorial}
For $p'\in[1,p]$, let $\iota\col k^{p',q}\hookrightarrow k^{p,q}$
denote the inclusion map.

As $k^{p',q}$ is a submodule for every $R$-module structure on 
$k^{p,q}$, we get a map 
  \begin{equation}
    \label{eq:submodules}
\iota^*\col\mat q{ep}\to\mat q{ep'}
  \end{equation}
of affine spaces over $k$, which is linear and surjective:  It sends each
$ep\times q$ matrix to the $(ep')\times q$ submatrix with
rows indexed by the pairs $(l,n)$ with $n\in[1,p']$.  

For $q'\in[q,ep]$, let $\pi\col k^{p,q'}\twoheadrightarrow k^{p,q}$
be the projection with $\pi(v_{h})=0$ for $h\ge q+1$.

Since $\Ker(\pi)$ is a submodule for every $R$-module structure on 
$k^{p,q'}$, we get a map 
  \begin{equation}
    \label{eq:quotientmodules}
\pi_*\col\mat {q'}{ep}\to\mat {q}{ep}
  \end{equation}
of affine spaces over $k$, which is linear and surjective: 
It sends every $ep\times q'$ matrix to its $ep\times q$ 
submatrix, whose columns are indexed by
the elements in $[1,q]$.
  \end{chunk}

\begin{lemma}
  \label{lem:functorial}
When $R$ is a Koszul algebra the maps \eqref{eq:submodules}
and \eqref{eq:quotientmodules} satisfy
  \begin{align*}
\pi_*(\LL_{p,q'}(R))\subseteq \LL_{p,q}(R)\supseteq(\iota^*)^{-1}(\LL_{p',q}(R))
  \end{align*}
 \end{lemma}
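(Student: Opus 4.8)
The plan is to realize each inclusion through a short exact sequence of graded modules supplied by the functorial maps of \ref{ch:functorial}, and then to read off linearity of the relevant term from the long exact sequence in $\ext{*}{R}{-}{k}$. Throughout I use that $R$ Koszul makes $k$, and hence each shift $k(-d)$, linear.

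First I treat $\pi_*(\LL_{p,q'}(R))\subseteq\LL_{p,q}(R)$. Fix $C\in\LL_{p,q'}(R)$ and put $N=R^{C}$, a Koszul module. The submodule $\Ker\pi$ is spanned by $v_{q+1},\dots,v_{q'}$, which lie in degree $1$ and are annihilated by $\fm$; thus $\Ker\pi\cong k^{q'-q}(-1)$, linear with $\indeg=1$. The quotient $N/\Ker\pi$ is exactly $R^{\pi_*(C)}$, so I obtain $0\to\Ker\pi\to N\to R^{\pi_*(C)}\to 0$. For the second inclusion, fix $C$ with $\iota^*(C)\in\LL_{p',q}(R)$ and put $M=R^{C}$. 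Then $k^{p',q}$ is the Koszul submodule $R^{\iota^*(C)}$, and the quotient $M/k^{p',q}$, spanned by the classes of $u_{p'+1},\dots,u_p$, lies in degree $0$ with $\fm$ acting as zero; hence it is isomorphic to $k^{p-p'}$, which is Koszul. This gives $0\to R^{\iota^*(C)}\to M\to k^{p-p'}\to 0$.

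In each case the conclusion follows by tracking internal degrees. A Koszul module $B$ has $\gext{i}{R}{B}{k}{j}=0$ for $j\ne i$, while $k^{q'-q}(-1)$ has $\gext{i}{R}{k^{q'-q}(-1)}{k}{j}=0$ for $j\ne i+1$. In the long exact sequence of $\ext{*}{R}{-}{k}$ the module I wish to bound is flanked by two such groups, and the relevant three-term exact piece has both outer terms vanishing whenever $j\ne i$: for the first sequence the flanks of $\gext{i}{R}{R^{\pi_*(C)}}{k}{j}$ are $\gext{i-1}{R}{\Ker\pi}{k}{j}$ (nonzero only for $j=(i-1)+1=i$) and $\gext{i}{R}{N}{k}{j}$; for the second the flanks of $\gext{i}{R}{M}{k}{j}$ are $\gext{i}{R}{k^{p-p'}}{k}{j}$ and $\gext{i}{R}{R^{\iota^*(C)}}{k}{j}$, both supported on the diagonal. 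Hence $\gb{i}{j}{R}{R^{\pi_*(C)}}=0$ and $\gb{i}{j}{R}{M}=0$ for all $j\ne i$. Since $p\ge1$, both modules have nonzero degree-$0$ component, so each is generated in degree $0$ and therefore Koszul; this is precisely the two desired inclusions.

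The argument is short once the isomorphism types $k^{q'-q}(-1)$ and $k^{p-p'}$ of the auxiliary terms are identified, so the main point to get right is the internal-degree bookkeeping: one must pin down the shift on $\Ker\pi$ --- forcing its Ext onto the line $j=i+1$, so that in the flanking slot at homological degree $i-1$ it contributes only at $j=i$ --- in order that the flanking groups genuinely vanish off the diagonal. I expect no deeper obstacle.
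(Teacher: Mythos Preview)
Your argument is correct and is essentially the paper's own proof: both set up the exact sequences $0\to L\to R^{C'}\to R^{\pi_*(C')}\to 0$ with $L$ concentrated in degree~$1$ and $0\to R^{\iota^*(C)}\to R^{C}\to N\to 0$ with $N$ concentrated in degree~$0$, then read off the vanishing of the middle $\Ext$-group from the flanking terms in the long exact sequence, using that $R$ is Koszul. You are simply a bit more explicit in identifying $L\cong k^{q'-q}(-1)$ and $N\cong k^{p-p'}$ and in spelling out the degree shift on $\Ext^{i-1}_R(L,k)$, whereas the paper records only that $L_j=0$ for $j\ne1$ and $N_j=0$ for $j\ne0$.
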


\begin{proof}
For $C\in\mat q{ep}$, the construction in \ref{ch:functorial} gives an exact 
sequence
$$
0\to R^{\iota^*(C)}\to R^C\to N\to 0
$$
where $N_j=0$ for $j\ne0$.  If\,\ $R^{\iota^*(C)}$ Koszul, then in the 
induced exact sequence
$$
\gext iRNkj\to\gext iR{R^C}kj\to\gext iR{R^{\iota^*(C)}}kj
$$
both extremal terms are zero for $j\ne i$, because $R$ is Koszul.

For $C'\in\mat {q'}{ep}$, the construction in \ref{ch:functorial} 
gives an exact sequence
$$
0\to L\to R^{C'}\to R^{\pi_*(C')}\to 0
$$
where $L_j=0$ for $j\ne1$.  When $R^{C'}$ is Koszul, in the induced exact  
sequence
$$
\gext{i-1}RLkj\to\gext iR{R^{\pi_*(C')}}kj\to\gext iR{R^{C'}}kj
$$
both extremal terms are zero for $j\ne i$, because $R$ is Koszul.
  \end{proof}

\begin{proof}[Proof of Theorem \emph{\ref{thm:functorial}}] Let
$U\subseteq\LL_{p',q}(R)$ be non-empty and open in $\mat{q}{ep'}$.
The subset $(\iota^*)^{-1}(U)$ of\,\ $\mat q{ep}$ is open, because $\iota^*$
is continuous, and not empty, because $\iota^*$ is surjective.  By Lemma
\ref{lem:functorial}, it is contained in $\LL_{p,q}(R)$.

Let $\sigma\col\mat q{ep}\to\mat{q'}{ep}$ be the
map that sends every $C\in\mat q{ep}$ to the $ep\times q'$ matrix,
obtained by the addition of zero columns with indices $q+1,\dots,q'$.
Let $U'\subseteq\LL_{p,q'}(R)$ be non-empty and open in $\mat{q'}{ep}$,
and pick $C'\in U'$.
In the affine subspace $W=C'+\sigma(\mat q{ep})$ of\,\ $\mat{q'}{ep}$, the set
$U'\cap W$ is non-empty and open.  Since $\pi_*|_W\col W\to\mat q{ep}$ is
a homeomorphism, $\pi_*(U'\cap W)$ is non-empty and open in $\mat q{ep}$.
By Lemma \ref{lem:functorial}, it is contained in $\LL_{p,q}(R)$.
  \end{proof}

Within a given parameter space $\mat q{ep}$, it is sometimes possible to transfer 
information between linear loci of different $k$-algebras.  We give an example.

\begin{proposition}
  \label{prop:homs}
Let $\rho\col R'\to R$ be a homomorphism of graded $k$-algebras.

If\,\ $\rho$ is a Golod homomorphism and $R'$ is Koszul, then for
all $p$ and $q$ one has
  \[
\LL_{p,q}(R)\supseteq\LL_{p,q}(R')\,.
  \]
\end{proposition}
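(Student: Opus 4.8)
The plan is to normalise bases so that the two parameter spaces literally coincide and then to read off linearity over $R$ from linearity over $R'$ through a change-of-rings formula. Since $\LL_{p,q}(R)$ and $\LL_{p,q}(R')$ are by definition subsets of the \emph{same} space $\mat q{ep}$, the map $\rho$ must carry a $k$-basis of $R'_1$ bijectively onto the fixed basis $x_1,\dots,x_e$ of $R_1$; in particular $\Ker\rho\subseteq\m'^2$ and $e=\rank_kR'_1=\rank_kR_1$. First I would verify that, under this normalisation, the parametrisation of \ref{ch:tables} is compatible with restriction of scalars: for a matrix $C\in\mat q{ep}$ the multiplication tables of $R'^C$ and $R^C$ are recorded by the same entries, because $\rho(x'_l)=x_l$ and because $\Ker\rho$, sitting in degrees $\ge 2$, annihilates any module concentrated in degrees $0,1$. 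Thus $R'^{C}=\rho^{*}(R^{C})$ and the whole statement reduces to the single implication: if $R'^{C}$ is a Koszul $R'$-module then $R^{C}$ is a Koszul $R$-module.

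Writing $M$ for the common underlying graded module, the next step is to invoke the defining property of a Golod homomorphism in its module form, which in the bigraded setting reads
\[
\po{M}{s,t}=\frac{\po[R']{M}{s,t}}{1-t\bigl(\po[R']{R}{s,t}-1\bigr)}\,.
\]
Here I use $\po[R']{M}{s,t}=\po[R']{R'^{C}}{s,t}$, again by restriction of scalars. Since $R'$ is Koszul and $R'^{C}$ is linear, \eqref{eq:linearg} with $d=0$ identifies the numerator as $\po[R']{M}{s,t}=\hilb{M}{-st}\cdot\hilb{R'}{-st}^{-1}$, a power series in the single product $st$; equivalently every nonzero $\gb i{j}{R'}{M}$ has $j=i$. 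So the numerator is already supported on the diagonal, and the content of the proposition becomes the claim that the right-hand side as a whole is supported on the diagonal, since that is exactly \eqref{eq:linearg} for $M$ over $R$, i.e.\ the assertion that $M=R^{C}$ is Koszul.

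Everything therefore hinges on the denominator $1-t(\po[R']{R}{s,t}-1)$, and this is the step I expect to be the main obstacle: I must show that $t(\po[R']{R}{s,t}-1)$ is a power series in $st$, equivalently that $\gb i{j}{R'}{R}=0$ for $j\neq i+1$ — that is, that $R$ has a \emph{linear} resolution over $R'$ (its $R'$-syzygies climbing exactly one internal degree per homological step). This is where the two hypotheses must be married: Koszulness of $R'$ pins down $\po[R']{k}{s,t}=\hilb{R'}{-st}^{-1}$, while Golodness of $\rho$ links the resolution of $R$ over $R'$ to the homotopy fibre, and I would extract the diagonal vanishing of $\gb i{j}{R'}{R}$ by running the displayed formula for $M=k$ and matching it against $\po{k}{s,t}$. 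Once the denominator is known to be a series in $st$, the formula collapses to $\po{M}{s,t}=\hilb{M}{-st}\cdot\hilb{R}{-st}^{-1}$, so $M=R^{C}$ is linear by \eqref{eq:linearg}; being concentrated in degrees $0,1$ it is short, and Theorem~\ref{thm:test} even re-derives that $R$ is Koszul. An alternative packaging, closer to the tools already at hand, would route the argument through Proposition~\ref{koszul-rings-modules}: $R'^{C}$ Koszul means $R'\ltimes M$ is Koszul, $\rho$ induces a surjection $R'\ltimes M\to R\ltimes M$, and one shows this induced map is again Golod, so that $R\ltimes M$ is Koszul and hence $M$ is linear over $R$; the obstacle is identical, as the kernel $\Ker\rho$ must there have a linear resolution over $R'\ltimes M$.
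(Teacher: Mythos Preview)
The paper's own proof is a one-line invocation of \cite[3.3]{AIS}; you are essentially trying to reprove that external result from scratch. The reduction is right---after matching bases, one must show that $M=R^{C}$ Koszul over $R'$ forces $M$ Koszul over $R$---and the bigraded change-of-rings formula is the natural tool. Note, though, that the displayed identity for a \emph{general} $R$-module $M$ is a theorem (due to Levin), not the defining property of a Golod map; the definition concerns only $M=k$, and the extension to all $M$ needs to be cited.

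The substantive gap is the step you yourself flag as the main obstacle, and your instinct that it is the crux is correct---but your proposed resolution is circular. To ``run the displayed formula for $M=k$'' and read off that $t\bigl(\po[R']{R}{s,t}-1\bigr)$ lies in $k[[st]]$, you would need to know that $\po{k}{s,t}$ already lies in $k[[st]]$, i.e., that $R$ is Koszul; this is neither among the hypotheses nor a consequence of them. In fact the intermediate claim is false without further input: for $R'=k[x,y]\twoheadrightarrow R=k[x,y]/(y^{3})$ the map is Golod (a hypersurface quotient of a regular ring) and $R'$ is Koszul, yet $t\bigl(\po[R']{R}{s,t}-1\bigr)=s^{3}t^{2}$ is not a series in $st$, and correspondingly $k\in\LL_{1,0}(R')\smallsetminus\LL_{1,0}(R)$. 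So neither your Poincar\'e-series route nor the alternative via $R'\ltimes M\to R\ltimes M$ can close as written. The result quoted from \cite{AIS} supplies precisely the missing ingredient; in every application this paper makes of the proposition, $\Ker\rho$ is concentrated in degree~$2$ and the linearity of $R$ over $R'$ is immediate, but that is an extra hypothesis your bootstrap on $M=k$ cannot manufacture.
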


 \begin{proof}
This follows from \cite[3.3]{AIS}, where it is proved that
if an $R$-module $M$ is Koszul when considered as a module 
over $R'$ via $\rho$, then $M$ is Koszul over $R$.
 \end{proof}

  \begin{chunk}
    \label{ch:partial}
We approximate the linear locus of\,\ $R$, \emph{from above}, by the sets
  \[
\LL_{p,q}^m(R)=\{C\in\mat q{ep} \mid R^C \text{ is $m$-step linear} \}\,,
 \]
defined for every integer $m\ge0$.  The following inclusions are evident: 
  \begin{equation}
    \label{eq:intersection}
\LL_{p,q}^m(R)\supseteq\LL_{p,q}^{m+1}(R)
  \quad\text{and}\quad
\LL_{p,q}(R)=\bigcap_{m\ges 0}\LL_{p,q}^m(R)\,.
  \end{equation}
   \end{chunk}

For completeness, we include the proof of a folklore result; stronger
ones have been communicated to us by David Eisenbud, Anthony
Iarrobino, and Clas L\"ofwall.

\begin{lemma}
  \label{lem:n-open}
When $R$ is Koszul $\LL^m_{p,q}(R)$ is open in $\mat q{ep}$ for every $m\ge0$.
 \end{lemma}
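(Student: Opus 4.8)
The plan is to exhibit $\LL_{p,q}^m(R)$ as a finite intersection of loci on which a single graded Betti number vanishes, and to prove that each such locus is open by the upper semicontinuity of Betti numbers in the family $\{R^C\}$. Since $p\ges1$, every module $R^C$ has $\inf(R^C)=0$, so by \ref{ch:partial} the structure $R^C$ is $m$-step linear exactly when $\gb ij{R}{R^C}=0$ for all $i\les m$ and all $j\ne i$. Consequently
\[
\LL_{p,q}^m(R)=\bigcap_{0\les i\les m}\;\bigcap_{j\ne i}\;\{C\in\mat q{ep}\mid\gb ij{R}{R^C}=0\}\,,
\]
and it suffices to show that each set in the intersection is open and that, for each fixed $i$, only finitely many of them differ from $\mat q{ep}$.

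The key step is to compute all these Betti numbers by one resolution that does \emph{not} depend on $C$. Fix a graded minimal free resolution $G_\bullet\to k$ over $R$, with each $G_i$ a finite free module; computing $\Tor$ from this resolution of the second argument gives
\[
\gb ij{R}{R^C}=\rank_k\big(\tor[i]{R}{R^C}{k}\big)_j=\rank_k\hh[i]{R^C\otimes_R G_\bullet}_j
\]
for every $C$. In each internal degree $j$, the complex $R^C\otimes_R G_\bullet$ is a complex of finite-dimensional $k$-spaces $\cdots\to P_{i+1}\xra{\delta_{i+1}(C)}P_i\xra{\delta_i(C)}P_{i-1}\to\cdots$ in which $\dim_kP_i$ is independent of $C$, while the differentials $\delta_i(C)$ have entries that are linear in the entries of $C$: indeed $\fm^2$ annihilates $R^C$, so only the degree-raising action of $R_1$ on $(R^C)_0$—which is exactly recorded by $C$—enters the matrices, introducing a single factor of $C$.

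With this picture the semicontinuity is formal. Over the affine space $\mat q{ep}$ one has
\[
\gb ij{R}{R^C}=\dim_kP_i-\rank\delta_i(C)-\rank\delta_{i+1}(C)\,,
\]
and since the locus where a matrix of polynomials has rank at most $r$ is closed, $C\mapsto\rank\delta_i(C)$ is lower semicontinuous; hence $C\mapsto\gb ij{R}{R^C}$ is upper semicontinuous and $\{C\mid\gb ij{R}{R^C}=0\}$ is open. For each fixed $i$ the module $R^C\otimes_R G_i$ is concentrated in finitely many internal degrees (because $R^C$ lives in degrees $0$ and $1$ and $G_i$ in finitely many degrees), so only finitely many of the loci with $j\ne i$ are proper; the displayed intersection is therefore finite, and $\LL_{p,q}^m(R)$ is open. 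The one point demanding care is the bookkeeping of the middle paragraph—checking that the differentials of $R^C\otimes_R G_\bullet$ genuinely depend polynomially on $C$ and that the resulting homology computes $\tor[i]{R}{R^C}{k}$; once that is in place the rank semicontinuity yielding openness is routine. I note that Koszulness of $R$ is not actually required for this argument, although it holds by hypothesis.
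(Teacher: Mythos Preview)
Your proof is correct and follows the same core idea as the paper's: fix a minimal free resolution $G$ of $k$ over $R$, observe that the differentials of $R^C\otimes_RG$ have entries that are linear in the entries of $C$, and deduce openness from rank conditions on these matrices. The paper exploits the Koszul hypothesis to take $G_i=R(-i)^{b_i}$; then $(R^C\otimes_RG_i)_j=0$ for $j\ne i,i+1$, so the only nontrivial vanishing needed is $\gtor iR{R^C}k{i+1}=0$, which is equivalent to the surjectivity of a single explicit map $\delta_i\colon(R^C_0)^{b_i}\to(R^C_1)^{b_{i-1}}$ for each $i\le m+1$. Your argument instead invokes the general formula $\dim_k\hh[i]{-}=\dim_kP_i-\rank\delta_i-\rank\delta_{i+1}$ and upper semicontinuity of each summand, together with the finiteness of the internal degrees in which $R^C\otimes_RG_i$ lives. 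This buys you the extra generality you note at the end: the Koszul hypothesis on $R$ is not actually needed for the openness of $\LL^m_{p,q}(R)$. The paper's route, in exchange, identifies the defining equations of the complement explicitly as the maximal minors of the $\delta_i$, which is occasionally useful downstream.
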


\begin{proof}
Pick a matrix $C$ in $\mat q{ep}$.

One has $\gb ijR{R^C}=\rank_k\gtor iR{R^C}kj$, so we fix $m\ge0$ and
prove that the subset of\,\ $\mat q{ep}$, defined by $\gtor iR{R^C}kj=0$
for $j\ne i\le m$, is open.

Let $G$ be a minimal free resolution of\,\ $k$ over $R$.  As $R$
is Koszul, we may assume $G_i=R(-i)^{b_i}$ with $b_i=\beta^R_i(k)$.
As ${R^C}$ is short, one has $({R^C}\otimes_RG_i)_j=0$ for $j\ne i,i+1$.
This yields $\gtor iR{R^C}kj=0$ for $j\ne i,i+1$, and an exact sequence
  \[
0\to\gtor iR{R^C}ki\to
(R^C_0)^{b_i}\xra{\,\delta_i\,}(R^C_1)^{b_{i-1}}\to\gtor{i-1}R{R^C}ki\to0
   \]
of\,\ $k$-vector spaces for every $i\ge0$, where
$\delta_i=(R^C_0\otimes_R\dd_i^G)_i$.

For each $i$, let $G^{(i)}$ denote the standard basis of\,\ $R(-i)^{b_i}$
over $R$.  In these bases $\dd^G_i$ is given by a matrix of linear forms
in $x_1,\dots,x_e$.  In the $k$-bases
  \begin{align*}
\{u_n\otimes g^{(i)}&\mid n=1,\dots,p\,;\,g^{(i)}\in G^{(i)}\}
  \quad\text{and}
    \\
\{v_h\otimes g^{(i-1)}&\mid h=1,\dots,q\,;\,g^{(i-1)}\in G^{(i-1)}\}
    \end{align*}
of\,\ $(R^C_0)^{b_i}$ of\,\ $(R^C_1)^{b_{i-1}}$, respectively,
the map $\delta_i$ then is given by a matrix of linear forms in the
elements $c_{(l,n),j}$ from the multiplication table in \ref{ch:tables}.
The condition $\gtor{i}R{R^C}k{i+1}=0$ for $i\le m$ is equivalent to the
surjectivity of\,\ $\delta_i$ for $i\le m+1$.  The latter condition means
that some maximal minor of the matrices $\delta_1,\dots,\delta_{m+1}$
is different from $0$, and so determines an open subset of $\mat q{ep}$.
  \end{proof}

  \section{Periodic linear modules}
    \label{sec:per}

In this section $R$ denotes a standard graded $k$-algebra.  We say that
a graded $R$-module $M$ is \emph{linear of period $2$} if there is an
exact sequence of graded $R$-modules
  \begin{equation}
    \label{eq:per}
0\to M(-2)\to R^p(-d-1)\to R^p(-d)\to M\to0
  \end{equation}
Splicing suitable shifts of the exact sequence above, one sees that $M$
is linear.  

We explore the interplay of periodicity, linearity, and shortness.  When 
$N$ is an $R$-module, $\Omega^R_i(N)$ denotes the $i$th module of 
syzygies in a minimal free resolution of $N$.  Assuming $R_3=0$, Lescot 
\cite[3.4]{Le} established part (1) of the next theorem.

\begin{theorem}
  \label{thm:per}
Let $R$ be standard graded $k$-algebra with $\rank_kR_1=e\ge1$, let $M$
be a non-zero $R$-module with $\inf M=d$, and $p$ a positive integer.
 \begin{enumerate}[\rm(1)]
 \item
The following conditions are equivalent.
 \begin{enumerate}[\quad\rm(i)]
  \item
$\po M{s,t}=ps^d\cdot(1-st)^{-1}$ and $M$ is short.
  \item
$M$ is linear over $R$ with $\hilb{M}{s}=(1+(e-1)s)\cdot ps^{d}$, and one has
\newline $\hilb{R}{s}=(1+(e-1)s)\cdot(1+s)$.
   \end{enumerate}
They imply that $R$ is Koszul.
   \medskip
 \item
If $Q$ is a standard graded $k$-algebra with $\rank_kQ_1=e$ and
$\psi\col Q\to R$ is a surjective homomorphism of algebras, then the following
conditions are equivalent.
 \begin{enumerate}[\quad\rm(i)]
  \item[\rm(iii)]
$\po[Q]M{s,t}=ps^{d}\cdot(1-(st)^{c+1})\cdot(1-st)^{-1}$
for some $c\ge1$, and $M$ is short.
  \item[\rm(iv)]
$M$ is linear over $Q$ with $\hilb{M}{s}=(1+(e-1)s)\cdot ps^{d}$, and one has
\newline $\hilb{Q}{s}=(1+(e-1)s)\cdot(1-s)^{-1}$.
  \item[\rm(v)]
$\po[Q]M{s,t}=ps^{d}\cdot(1+st)$ and
$\hilb{Q}{s}=(1+(e-1)s)\cdot(1-s)^{-1}$.
 \end{enumerate}
   \medskip
They imply that $Q$ is Koszul, Golod, and Cohen-Macaulay of dimension $1$.
   \medskip
 \item
If $g\in Q_2$ is a non-zero-divisor and $\Ker(\psi)=(g)$ in \emph{(2)}, then
the condition
   \medskip
 \begin{enumerate}[\quad\rm(i)]
  \item[\rm(vi)]
$M$ is linear of period $2$ over $R$ and $\hilb{R}{s}=(1+(e-1)s)\cdot(1+s)$\,.
 \end{enumerate}
   \medskip
satisfies the implications \emph{(v)}$\implies$\emph{(vi)}$\iff$\emph{(i)},
and also \emph{(v)}$\iff$\emph{(vi)} if $e\ne2$.
 \end{enumerate}
  \end{theorem}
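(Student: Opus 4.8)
The plan is to treat the four assertions $(v)\Rightarrow(vi)$, $(vi)\Rightarrow(i)$, $(i)\Rightarrow(vi)$, and $(vi)\Rightarrow(v)$ (the last only for $e\ne2$) separately, after normalizing $d=0$ by replacing $M$ with $M(d)$; every condition is compatible with this shift. Throughout I would use that $Q$ and $R=Q/(g)$ are Koszul simultaneously by \ref{ch:square}, and that $\hilb{R}{s}=\hilb{Q}{s}(1-s^{2})$ since $g$ is a non-zero-divisor of degree $2$; in particular $\hilb{Q}{s}=(1+(e-1)s)(1-s)^{-1}$ and $\hilb{R}{s}=(1+(e-1)s)(1+s)$ are interchangeable data.

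First I would dispatch the two easy directions. For $(vi)\Rightarrow(i)$, splicing shifted copies of \eqref{eq:per} produces the minimal free resolution of $M$, since the maps in \eqref{eq:per} have entries in $R_{1}\subseteq\fm$; hence $F_{i}=R^{p}(-d-i)$, so $\po{M}{s,t}=ps^{d}(1-st)^{-1}$, and an Euler-characteristic computation on \eqref{eq:per} gives $(1+s)\hilb{M}{s}=ps^{d}\hilb{R}{s}$, whence $\hilb{M}{s}=ps^{d}(1+(e-1)s)$ is short. For $(v)\Rightarrow(vi)$ I would run the matrix-factorization argument: condition $(v)$ gives the $Q$-free resolution $0\to Q^{p}(-d-1)\xrightarrow{\phi}Q^{p}(-d)\to M\to0$ with $\phi$ linear, and since $gM=0$ multiplication by $g$ on this resolution is null-homotopic, the homotopy being a linear map $\psi$ with $\phi\psi=g\cdot\mathrm{id}=\psi\phi$. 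Reducing the matrix factorization $(\phi,\psi)$ modulo $g$ yields the period-two resolution \eqref{eq:per}, which is $(vi)$.

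The heart is $(i)\Rightarrow(vi)$, and here I would argue through the Yoneda algebra $\mathcal E$ and the module $\mathcal M=\Ext_{R}^{*}(M,k)$ rather than through a matrix factorization: since $(i)\Rightarrow(vi)$ must hold for all $e$ while $(i)\Rightarrow(v)$ fails when $e=2$, the isomorphism $\Omega_{R}^{2}(M)\cong M(-2)$ cannot be produced by a matrix factorization (which would force $(v)$). The plan is to feed $(i)$ into the exact sequence of bigraded $\mathcal E$-modules from \cite{AIS} used in the proof of Proposition \ref{lem:m-step}; the Poincar\'e-series identity there forces every rank $\rank_{k}\mathcal F^{l,l}$ to vanish, so $\mathcal F=0$ and $\mathcal M$ acquires the two-term linear presentation $0\to\mathcal E\otimes_{k}(M_{1})^{*}\xrightarrow{\mu}\mathcal E\otimes_{k}(M_{0})^{*}\to\mathcal M\to0$, with $\dim_{k}M_{0}=p$ and $\dim_{k}M_{1}=(e-1)p$. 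From this I would show that the truncation $\bigoplus_{i\ges 2}\mathcal M^{i}$, shifted down by $2$, is presented by the very same data, producing an $\mathcal E$-linear isomorphism with $\mathcal M$; translating back through the linearity dictionary \eqref{eq:linearg} (equivalently Proposition \ref{koszul-rings-modules}) this is exactly $\Omega_{R}^{2}(M)\cong M(-2)$, i.e.\ $(vi)$. Establishing this self-similarity of $\mathcal M$ is the main obstacle, as it requires controlling $\mu$ and the multiplication of $\mathcal E$, not merely Hilbert-series bookkeeping.

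Finally, for $(vi)\Rightarrow(v)$ when $e\ne2$ I would lift the periodic resolution to $Q$: the period-two maps lift to linear $\phi_{1},\phi_{2}$ with $\phi_{1}\phi_{2}=ga$ and $\phi_{2}\phi_{1}=gb$ for scalar matrices $a,b\in\mat pp$, subject to the intertwining relations $a\phi_{1}=\phi_{1}b$ and $b\phi_{2}=\phi_{2}a$. Condition $(v)$ is equivalent to $\pd_{Q}M=1$, which holds precisely when $a$ and $b$ are invertible, so $(\phi_{1},\phi_{2}a^{-1})$ becomes a genuine matrix factorization. I would show that the hypothesis $e\ne2$---which makes $\dim_{k}M_{1}=(e-1)p\ges 2p$, forcing the linear entries of $\bar\phi_{1},\bar\phi_{2}$ to span enough of $R_{1}$---forces $a,b\in\GL_{p}(k)$ through these relations, giving $(v)$ via Part (2). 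The same analysis exhibits the obstruction at $e=2$, where $a,b$ may be singular, so periodicity over $R$ need not lift to a matrix factorization over $Q$; this is the source of the examples of \cite{GP,JS2}. The invertibility dichotomy for $a,b$ is the second main obstacle.
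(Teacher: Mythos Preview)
Your treatments of $(v)\Rightarrow(vi)$ and $(vi)\Rightarrow(i)$ are essentially correct and match the paper (the paper uses a $\Tor$ computation rather than a homotopy for the first, but the content is the same); for the second you should add that $M$ has no free summand, since $R_2\ne0$ while $M_{d+2}=0$, so the spliced complex is genuinely minimal.

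Your decomposition of the remaining work, however, differs from the paper's and both of your self-identified ``obstacles'' are real gaps that your outline does not close. For $(i)\Rightarrow(vi)$ via the Yoneda module: even granting $\mathcal F=0$ and the two-term $\mathcal E$-presentation of $\mathcal M$, neither \eqref{eq:linearg} nor Proposition~\ref{koszul-rings-modules} lets you pass from an isomorphism of $\mathcal E$-modules back to $\Omega_R^2(M)\cong M(-2)$; those results detect linearity, not full faithfulness of $\Ext_R^*(-,k)$ on Koszul modules. You would need a Koszul-duality equivalence, and you would still have to establish the self-similarity of $\mathcal M$ by controlling $\mu$, which you do not do. For $(vi)\Rightarrow(v)$ when $e\ne2$: the inequality $(e-1)p\ge 2p$ already fails at $e=1$, and for $e\ge3$ you give no mechanism by which the intertwining relations $a\phi_1=\phi_1 b$, $b\phi_2=\phi_2 a$ force $a,b\in\GL_p(k)$.

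The paper proceeds along a different split. It proves $(i)\Rightarrow(v)$ directly for $e\ne2$: when $e=1$ one has $Q\cong k[x]$ and $M\cong k^p$; when $e\ge3$, the Hilbert series shows $Q$ is Cohen--Macaulay of dimension~$1$ with minimal multiplicity, hence Golod, and the change-of-rings bound $\beta_i^Q(M)\le 2p$ from \cite[3.3.4]{Av:barca} together with \cite[5.5.3(5)]{Av:barca} (bounded Betti numbers over a Golod ring force finite projective dimension) gives $\pd_Q M\le1$, after which a Hilbert-series count pins down the resolution. The implication $(i)\Rightarrow(vi)$ is argued directly only for $e=2$, where $R$ is an artinian complete intersection: Eisenbud's periodicity theorem \cite[4.1]{Ei} identifies the fourth syzygy with a shift of the second, and then self-injectivity of $R$ and reflexivity are used, via the dualized resolution, to show $M\cong\Ker(\gamma)(2)$, yielding \eqref{eq:per}.
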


\begin{proof}
When $M$ is short we write $\hilb Ms$ in the form $s^d(a+bs)$.  

(1) (i)$\implies$(ii).
The algebra $R$ is Koszul by Theorem \ref{thm:test}, so using
\eqref{eq:linearg} we get
  \[
p\hilb R{st}= \frac{p\hilb M{st}}{t^d\cdot\po M{s,-t}} = (a+bst)(1+st)\,.
  \]
The expressions for $\hilb Ms$ and $\hilb Rs$ follow by comparing degrees
and coefficients.

(ii)$\implies$(i).  
This implication follows directly from \eqref{eq:linearg}.

\medskip

(2) (iii)$\implies$(iv).
The algebra $Q$ is Koszul by Theorem \ref{thm:test}, so \eqref{eq:linearg}
gives
  \[
p\hilb Q{st}= \frac{p\hilb M{st}}{t^d\cdot\po[Q]M{s,-t}} =
\frac{(a+bst)(1+st)}{1-(-st)^{c+1}}\,.
  \]
Recall that $\hilb Q{s}$ can be written as $h(s)/(1-s)^n$ with $h(s)$
in $\BZ[s]$ satisfying $h(1)\ne0$, and $n=\dim Q$.  Setting $t=1$
in the formula above, we obtain
  \[
ph(s)(1-(-s)^{c+1})=(a+bs)(1+s)(1-s)^n\in\BZ[s]\,.
  \]
Comparing orders of vanishing at $s=1$, we get $c=1=n$, hence
$ph(s)=a+bs$.  The desired expressions for $\hilb Ms$ and $\hilb Qs$
follow, along with $\po[Q]M{s,t}=(1+st)\cdot ps^d$.  Thus, $M$ has
projective dimension~$1$, which entails $\depth Q\ge1$.  The expression
for $\hilb Ms$ yields $\dim Q=1$, so $Q$ is Cohen-Macaulay; it also
shows that $Q$ has multiplicity $e$; as $\edim Q=e$ and $\dim Q=1$,
this is the minimal possible value.

(iv)$\implies$(v)$\implies$(iii).  
These implications follows directly from \eqref{eq:linearg}.

\medskip

(3) The isomorphism $R\cong Q/(g)$ with $g$ a non-zero-divisor
in $Q_2$ implies
   \begin{equation}
     \label{eq:square}
 \hilb{R}s=(1-s^2)\hilb{Q}s\,.
  \end{equation}

(v)$\implies$(vi).
By hypothesis, there is an exact sequence of $Q$-modules
  \[
0\to Q^p(-1)\to Q^p\to M\to 0
  \]
The resolution $0\to Q(-2)\xra{g}Q\to0$ of $R$ over $Q$ yields $\tor
[1]QR{M}\cong M(-2)$, so application of $\tor QR{-}$ to the exact
sequence above yields an exact sequence of the form \eqref{eq:per}.
The expression for $\hilb Rs$ comes from formula \eqref{eq:square}.

(vi)$\implies$(i).
{From} the exact sequence \eqref{eq:per} we obtain
  \[
\hilb{M}{s}=\hilb{R}{s}\cdot ps^d(1+s)^{-1}=(1+(e-1)s)\cdot ps^d\,.
  \]
It follows that $M$ admits no direct summand isomorphic to $R$, so the
linear free resolution of $M$ over $R$, obtained by splicing suitable
shifts of \eqref{eq:square}, is minimal.

(i)$\implies$(v) when $e\ne2$.  
If $e=1$, then one has $Q\cong k[x]$ and $M\cong k^p$.

Assume $e\ge3$.  From \cite[3.3.4]{Av:barca}, one gets $\beta^Q_i(M)\le2p$
for each $i$. The ring $Q$ is Cohen-Macaulay of dimension $1$.  One  
gets $\hilb Qs=(1+(e-1)s)(1-s)^{-1}$ from \eqref{eq:square}, so $Q$
has embedding dimension $e$ and multiplicity $e$.  Thus,
$Q$ has \emph{minimal multiplicity}, and so is Golod; see
\cite[5.2.8]{Av:barca}.  This implies that $M$ has finite projective
dimension over $Q$; see \cite[5.5.3(5)]{Av:barca}.  As $\depth Q=1$,
there is an exact sequence
  \[
0\to \bigoplus_{j=1}^nQ^{r_j}(-j)\to Q^p(-d)\to M\to 0
  \]
The expressions for $\hilb Qs$, noted above, and for $\hilb Ms$, from 
(ii), then yield
  \[
ps^d(1+(e-1)s)=\hilb Ms
=\bigg(ps^d-\sum_{j=1}^nr_js^{d+j}\bigg)\frac{1+(e-1)s}{1-s}\,.
  \]
We get $ps^d(1-s)=ps^d-\sum_{j=1}^nr_js^j$, whence $r_j=0$
for $j\ne d+1$ and $r_{d+1}=p$.

(i)$\implies$(vi) when $e=2$.  
We may assume $d=0$.

By (1) the algebra $R$ is Koszul, hence quadratic, and thus $R\cong
k[x,y]/(f,g)$ with $f,g$ a regular sequence of quadrics.

The hypothesis on $\po M{s,t}$ give an exact sequence
  \[
0\to N\to R^p(-3)\xra{\alpha}R^p(-2)\xra{\beta}R^p(-1)
\xra{\gamma}R^p\to M\to0
  \]
As $R$ is complete intersection, one has $N\cong\Coker(\alpha)(-2)$ by \cite[4.1]{Ei}.

Set $(-)^*=\Hom_R(-,R)$.  Since $R$ is self-injective, we get an exact sequence 
  \[
0\to M^*\to R^p\xra{\gamma^*}R^p(1)\xra{\beta^*}R^p(2)\xra{\alpha^*}R^p(3)
\to N^*\to0
  \]
and an isomorphism $\Ker(\alpha^*)\cong N^*(-2)$.  From these data, we obtain 
  \[
M^*\cong\Omega^2_R(\Ker(\alpha^*))\cong \Omega^2_R(N^*(-2))\cong N^*(-4)
  \]
As all $R$-modules are reflexive,
we get $M\cong N(4)\cong\Coker(\alpha)(2)
\cong\Ker(\gamma)(2)\,.$
  \end{proof}

We use modules of period $2$ to study the linear locus $\LL_{p,(e-1)p}(R)$.

 \begin{chunk}
    \label{ch:per}
We approximate the linear locus of $R$, \emph{from below}, by the set
  \[
\LP^2_{p,q}(R)=\{C\in\LL_{p,q}(R)\mid R^C\text{ is of period }2\}\,.
  \]
  \end{chunk}

\begin{corollary}
  \label{cor:per}  
Assume $\hilb{R}{s}=1+es+(e-1)s^2$ with $e\ge1$, and $R\cong Q/(g)$ for
some standard graded algebra $Q$ and non-zero-divisor $g\in Q_{2}$.

For each positive integer $p$ the set $\LL_{p,(e-1)p}^2(Q)$ is open
in $\mat{(e-1)p}{ep}$ and there the following inclusions hold,
with equality when $e\ne2$:
  \begin{equation*}
\LL_{p,(e-1)p}(R)=\LP^2_{p,(e-1)p}(R)
\supseteq\LL_{p,(e-1)p}(Q)=\LL_{p,(e-1)p}^2(Q)\,.
  \end{equation*}

If $Q_1$ contains a non-zero-divisor (e.g., if\,\ $k$ is infinite),
then $\LL_{p,(e-1)p}(Q)\ne\varnothing$.
   \end{corollary}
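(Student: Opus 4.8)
The plan is to reduce the whole corollary to a single module and then read the set relations off Theorem~\ref{thm:per}. Combining \eqref{eq:square} with the factorization $1+es+(e-1)s^2=(1+(e-1)s)(1+s)$ gives $\hilb Qs=(1+(e-1)s)(1-s)^{-1}$. Now fix $C\in\mat{(e-1)p}{ep}$ and let $M$ be the corresponding module on the space $k^{p,(e-1)p}$. Since $M_j=0$ for $j\ne0,1$, the element $g\in Q_2$ annihilates $M$, so $M$ is simultaneously the $Q$-module $Q^C$ and the $R$-module $R^C$; moreover $M$ is short, $\inf M=0$, and $\hilb Ms=p(1+(e-1)s)$. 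With these Hilbert series in force, condition (ii) of Theorem~\ref{thm:per}(1) is exactly linearity of $M$ over $R$, so $C\in\LL_{p,(e-1)p}(R)$ is equivalent to condition (i); condition (iv) of Theorem~\ref{thm:per}(2) is exactly linearity of $M$ over $Q$, so $C\in\LL_{p,(e-1)p}(Q)$ is equivalent to condition (v); and, as linearity of period $2$ is the same as being linear and of period $2$, membership $C\in\LP^2_{p,(e-1)p}(R)$ amounts to condition (vi).

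With this dictionary the displayed chain becomes a transcription of the implications in Theorem~\ref{thm:per}(3). The equivalence (vi)$\iff$(i) gives the first equality $\LL_{p,(e-1)p}(R)=\LP^2_{p,(e-1)p}(R)$; the implication (v)$\implies$(vi) gives the inclusion $\LP^2_{p,(e-1)p}(R)\supseteq\LL_{p,(e-1)p}(Q)$; and the equivalence (v)$\iff$(vi), valid for $e\ne2$, promotes this inclusion to an equality in that case.

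The remaining equality $\LL_{p,(e-1)p}(Q)=\LL^2_{p,(e-1)p}(Q)$ is the one new ingredient, and I expect it to be the crux. I would in fact prove the stronger $\LL^1_{p,(e-1)p}(Q)\subseteq\LL_{p,(e-1)p}(Q)$ and then invoke \eqref{eq:intersection} to squeeze. So suppose $Q^C$ is $1$-step linear and put $N=\Omega^Q_1(Q^C)$. Generation in degree $0$ forces $N_0=0$ and $\dim_k N_1=ep-(e-1)p=p$, while $1$-step linearity makes $N$ generated in degree $1$, hence minimally by $p$ elements, yielding a surjection $Q^p(-1)\twoheadrightarrow N$. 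Computing Hilbert series,
\[
\hilb Ns=p\hilb Qs-\hilb{Q^C}s=ps(1+(e-1)s)(1-s)^{-1}=\hilb{Q^p(-1)}s\,,
\]
so this surjection is an isomorphism; thus $N$ is free and $0\to Q^p(-1)\to Q^p\to Q^C\to0$ is a minimal resolution, proving $Q^C$ Koszul. This is where the precise shape of $\hilb Qs$ is used to force projective dimension~$1$.

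It remains to settle openness and non-emptiness. For openness I would use a dichotomy that sidesteps assuming $Q$ Koszul: if $\LL^2_{p,(e-1)p}(Q)$ is empty it is open, and otherwise the equality just proved yields a Koszul $Q$-module with Hilbert series $p(1+(e-1)s)$, whence $Q$ is Koszul by Theorem~\ref{thm:per}(2), so Lemma~\ref{lem:n-open} applies and gives openness. Getting this dichotomy right, with no circularity against the Koszul hypothesis of Lemma~\ref{lem:n-open}, is the one delicate point. For non-emptiness, a non-zero-divisor $f\in Q_1$ gives a short exact sequence $0\to Q(-1)\xra{f}Q\to Q/fQ\to0$, so $(Q/fQ)^p$ is a Koszul $Q$-module with Hilbert series $p(1+(e-1)s)$ and hence a point of $\LL_{p,(e-1)p}(Q)$. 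Finally, when $k$ is infinite such an $f$ exists: the non-zero-divisor $g\in Q_2$ gives $\depth Q\ge1$, so no associated prime of $Q$ contains $Q_1$, and prime avoidance over an infinite field produces $f$.
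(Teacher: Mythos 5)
Your proof is correct, and its skeleton coincides with the paper's: the paper likewise begins with the remark that a short $Q$-module is annihilated by $g$ for degree reasons, hence is simultaneously a $Q$-module and an $R$-module with the same multiplication table, reads the equality $\LL_{p,(e-1)p}(R)=\LP^2_{p,(e-1)p}(R)$ and the inclusion (equality for $e\ne2$) off the implications (v)$\implies$(vi)$\iff$(i) of Theorem~\ref{thm:per}, and proves non-emptiness with exactly your module $(Q/(h))^p$ for a degree-one non-zero-divisor $h$. Where you genuinely diverge is in the two points the paper compresses. For the equality $\LL_{p,(e-1)p}(Q)=\LL^2_{p,(e-1)p}(Q)$ the paper only says Theorem~\ref{thm:per} ``implies the equalities,'' whereas you prove the stronger containment $\LL^1_{p,(e-1)p}(Q)\subseteq\LL_{p,(e-1)p}(Q)$ by a direct syzygy computation: one-step linearity together with $\hilb Qs=(1+(e-1)s)(1-s)^{-1}$ gives a surjection $Q^p(-1)\twoheadrightarrow\Omega^Q_1(Q^C)$ between graded modules with equal Hilbert series, hence an isomorphism and a two-term linear resolution; this is self-contained, requires no Koszul hypothesis on $Q$, and sharpens the statement. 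For openness the paper simply cites Lemma~\ref{lem:n-open}, whose hypothesis that the algebra be Koszul is not among the corollary's assumptions (in the paper's application, Proposition~\ref{prop:per3}, $Q$ is known Koszul beforehand); your dichotomy---either $\LL^2_{p,(e-1)p}(Q)$ is empty, hence open, or it contains a point, in which case your equality produces a short non-free linear $Q$-module, Theorem~\ref{thm:test} makes $Q$ Koszul, and Lemma~\ref{lem:n-open} then applies---handles this without circularity. Your prime-avoidance argument ($g$ a non-zero-divisor gives $\depth Q\ge1$, so over an infinite field $Q_1$ is not covered by the finitely many associated primes) likewise supplies the justification for the parenthetical ``e.g., if $k$ is infinite'' that the paper's proof leaves implicit.
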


  \begin{proof}
Every short $Q$-module is annihilated by $g$, for degree reasons,
and so is also an $R$-module.  With this remark, Theorem \ref{thm:per}
implies the inclusion and the equalities.  The set $\LL_{p,(e-1)p}^2(Q)$
is open by Lemma~\ref{lem:n-open}.

If\,\ $h\in Q_{1}$ is a non-zero divisor, then for $N=(Q/(h))^{p}$
one has an exact sequence
  \[
0\to Q(-1)^p\xra{\, h\,} Q^p\to N\to0
  \]
that gives $\hilb{N}s=(p-ps)\hilb Qs=p+(e-1)ps$; thus, $N$ is
in $\LL^2_{p,(e-1)p}(Q)$.
  \end{proof}

When $e=2$, neither the implication (v)$\implies$(vi) in Theorem \ref{thm:per},
nor the inclusion in Corollary \ref{cor:per} can be reversed in general:

\begin{example}
  \label{ex:per}  
Set $Q=k[x,y]/(x^2)$ and $R=Q/(y^2)$.  The $R$-module $M=R/(x)$ then
is linear of period $2$, as demonstrated by the exact sequence
  \begin{gather*}
0\to M(-2)\to R(-1)\xra{x}R\to M\to0
  \end{gather*}
However, as a $Q$-module $M$ has infinite projective dimension and is not linear.
   \end{example}

For cyclic modules, we have an additional criterion for openness.

\begin{proposition}
  \label{prop:per}  
If $R$ is a Koszul algebra with $\hilb{R}{s}=1+es+(e-1)s^2$, then 
$\LL_{1,e-1}(R)$ is an open subset of $\mat{(e-1)}{e}$, and there are
equalities
  \[
\LL_{1,e-1}(R)=\LP^2_{1,e-1}(R)=\LL^2_{1,e-1}(R)\,.
  \]
   \end{proposition}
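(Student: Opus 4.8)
The plan is to exploit the two evident inclusions $\LP^2_{1,e-1}(R)\subseteq\LL_{1,e-1}(R)\subseteq\LL^2_{1,e-1}(R)$ --- the first holding by the definition in \ref{ch:per}, the second by \eqref{eq:intersection}, since a Koszul module is $m$-step linear for every $m$ --- and then to close the loop by proving the reverse inclusion $\LL^2_{1,e-1}(R)\subseteq\LP^2_{1,e-1}(R)$. Granting this, all three sets coincide, and openness of $\LL_{1,e-1}(R)$ in $\mat{(e-1)}{e}$ is immediate, because $\LL^2_{1,e-1}(R)$ is open by Lemma \ref{lem:n-open} (here $R$ is Koszul by hypothesis, so the lemma applies).

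So I would fix $C\in\LL^2_{1,e-1}(R)$ and analyze $M=R^C$, which has Hilbert series $1+(e-1)s$. Being $0$-step linear, $M$ is generated in degree $0$, and as $\rank_k M_0=1$ it is cyclic: $M=R/I$ with $\rank_k I_1=e-(e-1)=1$ and $I_{\ges2}=R_{\ges2}=R_2$ (using $R_3=0$). Write $I_1=kx$. Then $1$-step linearity says the first syzygy $I$ is generated in degree $1$, which forces $I=(x)$ and hence $xR_1=R_2$; in particular $R_1\xra{x}R_2$ is surjective with a $1$-dimensional kernel, which I denote $k\xi$.

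Next I would pin down the second syzygy. From the start $R(-1)\xra{x}R\to M\to0$ of the minimal resolution one reads off $\Omega^2_R(M)=\Ann_R(x)(-1)$, a module concentrated in degrees $2$ and $3$, with degree-$2$ component $k\xi$ and degree-$3$ component $R_2$. Thus $2$-step linearity is precisely the vanishing of its degree-$3$ minimal generators, that is $\xi R_1=R_2$ (equivalently $\gb23RM=0$, cf.\ Proposition \ref{lem:m-step}). Now commutativity enters: $x\xi=\xi x=0$ places the nonzero form $x$ in the kernel of $R_1\xra{\xi}R_2$, which $\xi R_1=R_2$ forces to equal $kx$. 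Hence $\Ann_R(\xi)=kx\oplus R_2=(x)$, so $R\xi\cong(R/(x))(-1)=M(-1)$ and therefore $\Omega^2_R(M)=\Ann_R(x)(-1)=R\xi(-1)\cong M(-2)$. This furnishes an exact sequence $0\to M(-2)\to R(-1)\to R\to M\to0$ of the shape \eqref{eq:per}, so $M$ is linear of period $2$, hence linear; that is, $C\in\LP^2_{1,e-1}(R)$, which completes the reverse inclusion.

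The crux --- and the step I expect to demand the most care --- is the third paragraph: computing $\Omega^2_R(M)$ exactly and upgrading the numerical condition $\xi R_1=R_2$ to the honest isomorphism $\Omega^2_R(M)\cong M(-2)$, rather than to mere period-$2$ Betti numbers. The subtlety is that the resolution alternates multiplication by $x$ and by $\xi$, and these need not agree (they coincide exactly when $x$ is a Conca generator, i.e.\ $x^2=0$), so the argument must run through the abstract isomorphism of syzygy modules and not assume a single periodic operator. I would also record that the degenerate case $e=1$ (where $q=0$, $R=k[x]/(x^2)$ and $M=k$) is covered by the same computation, with $\xi=x$.
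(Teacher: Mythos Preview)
Your proof is correct and follows essentially the same approach as the paper. The paper writes $a$ and $b$ for your $x$ and $\xi$, obtains the presentation $R(-2)\xrightarrow{b}R(-1)\xrightarrow{a}R\to M\to0$ from $\po{M}{s,t}\equiv 1+st+(st)^2\pmod{t^3}$ (via Proposition~\ref{lem:m-step}), and then argues that $bR=(0:a)$ forces $aR=(0:b)$ by the rank count $\rank_k(aR)\le\rank_k(0:b)=\rank_k(R/bR)=\rank_k(aR)$; this is exactly your degree-by-degree computation that $\Ann_R(\xi)=kx\oplus R_2=(x)$, packaged globally rather than componentwise.
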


  \begin{proof}
In view of \eqref{eq:intersection}, it suffices to show that if
$\LL^2_{1,e-1}(R)$ is not empty, then it is contained in $\LP^2_{1,e-1}(R)$.
Pick $C$ in $\LL_{1,e-1}^2(R)$ and set $M=R^C$.  One then has $\hilb
Ms=1+(e-1)s$, which implies $\po{M}{s,t}\equiv 1+st+(st)^2 \pmod{t^{3}}$,
by Lemma~\ref{lem:m-step}.  Thus, for appropriate $a,b\in R_1$ there is
an exact sequence
  \[
R(-2)\xrightarrow{b} R(-1)\xrightarrow{a} R\to M\to 0
  \]
It gives $bR=(0:a)$ and hence $aR\subseteq (0:(0:a))=(0:b)$.
The resulting relations
  \[
\rank_k(aR)\le \rank_k(0:b)=\rank_k(R/bR)=\rank_k(aR)
  \]
imply $(0:b)=aR$, hence $bR\cong R/aR$.  As a result, we obtain an exact
sequence
  \[
0\to M(-2)\to R(-1)\xrightarrow{a} R\to M\to0
 \qedhere
  \] \end{proof}

\section{Algebras with Conca generators}
  \label{sec:conca}

In this section $R$ is a standard graded algebra, and we set
  \[
e=\rank_kR_1 \quad\text{and}\quad r=\rank_kR_2\,.
  \]
A \emph{Conca generator} of\,\ $R$ is a non-zero element $x\in R_1$
with $xR_1=R_2$ and $x^2=0$, cf.\ \cite{AIS}.  We collect relevant facts
about algebras containing such an element.

    \begin{chunk}
    \label{ch:conca1}
When $k$ is algebraically closed, generic quadratic algebras with $r\le
e-1$ have a Conca generator; see the proof of \cite[Thm.\,10]{Co}.
 \end{chunk}

    \begin{chunk}
    \label{ch:conca2}
If\,\ $x$ is a Conca generator for $R$, then one clearly has $r\le e-1$
and $R_3=0$.  

Furthermore, the algebra $R$ then is Koszul by \cite[2.7]{CRV},
see also \cite[Lem.\,2]{Co} or \cite[1.1]{AIS}, and every $R$-module $M$
with $xM=0$ is linear by \cite[4.2]{AIS}.
 \end{chunk}

  \begin{chunk}
    \label{ch:presentation}
The algebra $R$ has a Conca generator if and only if it has a presentation
$R=k[x_1,\dots,x_{e}]/I$ with defining ideal $I$ generated by
   \begin{align}
 \label{eq:monomials}
x_{l}x_e
   &\quad\text{for}\quad r+1\le l\le e\,,
  \\
 \label{eq:quadrics}
x_lx_{l'}-\sum _{h=1}^r a_{l,l';h}x_hx_e
   &\quad\text{for}\quad 1\le l\le l'\le e-1\,.
  \end{align}
The class of\,\ $x_e$ is thus a Conca generator for $R$.
  \end{chunk}

Theorem \ref{i2} from the introduction is contained in Propositions 
\ref{prop:(e-1)p}, \ref{prop:e-1}, and \ref{prop:(e-r)p}.  In their proofs, 
we use the order on $\seg{1,e}\times \seg{1,p}$ defined in \eqref{eq:order}.

\begin{proposition}
\label{prop:(e-1)p}
Let $R$ be a standard graded algebra with a Conca generator.  

For all positive integers $p,q$ with $q\le (e-1)p$ one has 
$\LL_{p,q}(R)\ne\varnothing$.

If $r=e-1$, then $\LL_{p,q}(R)\ne\varnothing$ implies $q\le (e-1)p$,
one has $\LP^2_{p,(e-1)p}(R)\ne\varnothing$ for each $p\ge1$, the 
set $\LP^2_{1,e-1}(R)$ is open in $\mat {e-1}1$, and 
$\LP^2_{1,e-1}(R)=\LL_{1,e-1}(R)$.
  \end{proposition}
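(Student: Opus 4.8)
The plan is to treat the three assertions of the proposition in sequence, exploiting the explicit presentation of $R$ with Conca generator $x_e$ recorded in \ref{ch:presentation} and the fact, from \ref{ch:conca2}, that any $R$-module $M$ with $x_eM=0$ is automatically linear. This last fact is the engine: it lets us construct Koszul modules by finding module structures on $k^{p,q}$ on which $x_e$ acts as zero, and it simultaneously forces strong numerical constraints when $r=e-1$.

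For the first statement, that $\LL_{p,q}(R)\neq\varnothing$ whenever $q\le(e-1)p$, I would construct an explicit point of $\mat q{ep}$. Since $x_e$ annihilates $R_2$ up to the relations and $x_e^2=0$, the natural candidate is a module $M$ with underlying space $k^{p,q}$ on which $x_e$ acts as $0$ and $x_1,\dots,x_{e-1}$ act so that the images $x_lu_n$ span all of $(R^C)_1=k^q$. Because we have $(e-1)p$ products $x_lu_n$ with $1\le l\le e-1$ available to hit a $q$-dimensional target and $q\le(e-1)p$, a generic choice of the $(e-1)p\times q$ block makes $\lambda^C_1$ surjective onto $k^q$; setting the $x_e$-block to zero guarantees $x_eM=0$, so $M$ is linear by \ref{ch:conca2}, i.e.\ $C\in\LL_{p,q}(R)$. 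The main point to verify is that such a $C$ genuinely defines an $R$-module, i.e.\ that the relations \eqref{eq:monomials} and \eqref{eq:quadrics} are respected; here $x_eM=0$ kills \eqref{eq:monomials} outright, and \eqref{eq:quadrics} becomes a linear condition $x_lx_{l'}\cdot u_n=0$ in $(R^C)_2=0$, which holds vacuously since $M$ is short. So the construction is forced to work.

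For the second statement, assume $r=e-1$. The necessary bound $q\le(e-1)p$ should follow from Corollary~\ref{n-injective}: when $r=e-1$ one computes $v=(e+\sqrt{e^2-4(e-1)})/2=(e+(e-2))/2=e-1$, so the inequality $q\le vp$ there reads exactly $q\le(e-1)p$. The nonemptiness of $\LP^2_{p,(e-1)p}(R)$ I would get from Corollary~\ref{cor:per}: the presentation \ref{ch:presentation} exhibits $R$ as $Q/(x_e^2)$ where I can take $Q$ to be the algebra with the relations \eqref{eq:quadrics} and the $x_lx_e$ for $l\le e-1$ but without $x_e^2$, so that $g=x_e^2\in Q_2$ is a non-zero-divisor and $\hilb Rs=1+es+(e-1)s^2$; then Corollary~\ref{cor:per} gives $\LP^2_{p,(e-1)p}(R)=\LL_{p,(e-1)p}(R)\supseteq\LL_{p,(e-1)p}(Q)\neq\varnothing$ provided $Q_1$ has a non-zero-divisor. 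Checking that $g$ is a non-zero-divisor and that this $Q$ has the right Hilbert series via \eqref{eq:square} is the technical crux of this part.

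The final two claims, that $\LP^2_{1,e-1}(R)$ is open in $\mat{e-1}1$ and equals $\LL_{1,e-1}(R)$, I would deduce directly from Proposition~\ref{prop:per}, which asserts exactly that $\LL_{1,e-1}(R)=\LP^2_{1,e-1}(R)=\LL^2_{1,e-1}(R)$ and that this set is open, under the standing hypothesis $\hilb Rs=1+es+(e-1)s^2$ and $R$ Koszul—both of which hold here since a Conca generator with $r=e-1$ forces $R_3=0$, $r=e-1$, and Koszulness by \ref{ch:conca2}. The only obstacle I anticipate is bookkeeping: confirming that the cyclic case genuinely falls under the hypotheses of Proposition~\ref{prop:per} and that the openness there is stated for the precise space $\mat{e-1}1=\mat{(e-1)}{e}$ we need. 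I expect the genuinely nontrivial step to be the explicit generic construction in the first statement, since it is the one place where we must produce, rather than cite, a Koszul module.
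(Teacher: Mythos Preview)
Your plan for the first assertion and for the cyclic case is essentially the paper's: construct $C$ with $x_e$-block zero so that $x_eR^C=0$ and invoke \ref{ch:conca2}, then cite Proposition~\ref{prop:per} for $p=1$. (Note that your worry about whether $C$ ``genuinely defines an $R$-module'' is moot: by \ref{ch:tables} every matrix in $\mat q{ep}$ does, since $(R^C)_2=0$ kills all relations automatically. The only content is that $R^C$ be generated in degree~$0$, which your surjectivity of $\lambda^C_1$---equivalently $\rank C=q$---ensures; the paper achieves this by taking $C_\bss$ to be the identity.)

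The genuine divergence is in showing $\LP^2_{p,(e-1)p}(R)\ne\varnothing$ when $r=e-1$. You route this through Corollary~\ref{cor:per}, which requires exhibiting $R\cong Q/(g)$ with $g\in Q_2$ a non-zero-divisor and then a non-zero-divisor in $Q_1$. You correctly flag the verification that $x_e^2$ is regular on your $Q$ as ``the technical crux,'' but you do not carry it out, and it is not automatic: it amounts to proving $\hilb Qs=(1+(e-1)s)/(1-s)$, which needs something like a Gr\"obner-basis argument for the relations \eqref{eq:quadrics} alone. The paper bypasses this entirely with a one-line rank count inside $R$: since $x_e^2=0$ and $x_eR_1=R_2$, one has $x_eR\subseteq(0:x_e)$, and both have $k$-dimension $e$ in the $2e$-dimensional ring $R$, so $(0:x_e)=x_eR$. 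This immediately yields the exact sequence
\[
0\to (R/x_eR)(-2)\cong x_eR(-1)\hookrightarrow R(-1)\xrightarrow{\,x_e\,} R\to R/x_eR\to 0,
\]
so $(R/x_eR)^p\in\LP^2_{p,(e-1)p}(R)$ directly, with no auxiliary algebra $Q$ needed. Your approach is not wrong, but it trades a two-line internal argument for an external construction with a nontrivial unverified hypothesis.
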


  \begin{proof}
Let $\bss$ be the set consisting of the $q$ smallest elements of
$[1,e]\times[1,p]$, and $C\in\mat q{ep}$ the matrix with $C_{\bss}$ equal
to the $q\times q$ unit matrix and $C_{(l,n)}=0$ for $(l,n)\notin\bss$.
The condition $q\leq (e-1)p$ implies $(e,n)\notin\bss$ for $n=1,\dots,p$,
hence $x_eR^C=0$.  Now recall that every $R$-module $M$ with $x_eM=0$
is linear, see \ref{ch:conca2}.

Assume $r=e-1$.  When $\LL_{p,q}(R)\ne\varnothing$, Corollary
\ref{n-injective} yields $q\le (e-1)p$.  If $x$ is a Conca generator, 
a rank count gives $(0:x)=xR$, so $(R/xR)^p$ is in $\LP^2_{p,(e-1)p}(R)$.
The sets $\LL_{1,e-1}(R)$ and $\LP^2_{1,e-1}(R)$
are equal and open by Proposition \ref{prop:per}.
  \end{proof}

\begin{proposition}
\label{prop:e-1}
Let $R$ be a standard graded algebra with a Conca generator.

If\,\ $q\le e-1$, then the interior of\,\ $\LL_{p,q}(R)$ is not empty.
 \end{proposition}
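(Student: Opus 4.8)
The plan is to reduce to a single cyclic module and then force its entire minimal resolution to be linear. Because $R$ has a Conca generator it is Koszul by \ref{ch:conca2}, so Theorem~\ref{thm:functorial} is available. Applying it first with $p'=1$ and then with $q'=e-1$---both legitimate, since $q\le e-1=(e-1)\cdot1$---reduces the assertion to the statement that $\LL_{1,e-1}(R)$ has non-empty interior in $\mat{e-1}e$.

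For $p=1$ the open locus $\{\rank C=e-1\}$ of $\mat{e-1}e$ consists of the cyclic modules $R^C=R/(w)$, where $\langle w\rangle=\Ker(\lambda^C_1)\subseteq R_1$ is the line determined by $C$. Inside it I would isolate the subset $U$ on which $wR_1=R_2$; this is precisely the condition that $R^C$ be $1$-step linear, so $U=\LL^1_{1,e-1}(R)$ is open by Lemma~\ref{lem:n-open}. It is non-empty, because the Conca generator $x$ satisfies $xR_1=R_2$, so the line $\langle x\rangle$ belongs to $U$; the corresponding module $R/(x)$ is even Koszul, being annihilated by $x$ (see \ref{ch:conca2}).

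The crux is to prove $U\subseteq\LL_{1,e-1}(R)$, i.e.\ that every $M=R/(w)$ with $wR_1=R_2$ is Koszul; this produces the required non-empty open subset of the linear locus. Such an $M$ has $\hilb Ms=1+(e-1)s$ and first syzygy $\Omega^R_1(M)=wR\cong(R/(0\col w))(-1)$; a Hilbert-series count gives $\hilb{R/(0\col w)}s=1+rs$. As $M$ is generated in a single degree and presented by linear forms, $M$ is Koszul exactly when $R/(0\col w)$ is linear---a cyclic module now carrying only $r\le e-1$ linear strands. I would run this reduction as an induction, at each stage passing to the next syzygy and invoking the structure theory of \cite{AIS} for the linear part of resolutions over algebras with a Conca generator; the terminal case $r=e-1$ is the periodic situation, where Proposition~\ref{prop:per} already delivers openness and the identity $\LL_{1,e-1}(R)=\LL^2_{1,e-1}(R)$.

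The main obstacle is this propagation. A generic member $M=R/(w)$ of $U$ is annihilated by no Conca generator (that would force $w$ itself to be one), so \ref{ch:conca2} does not apply to $M$, and one cannot conclude merely by intersecting the open loci $\LL^m_{1,e-1}(R)$ over all $m$. What must be established is that a single linear relation, together with the rigidity the Conca generator imposes on $\Ext_R(M,k)$, makes the free $\mathcal E$-module measuring the linearity defect---the module $\mathcal F$ of the exact sequence in the proof of Proposition~\ref{lem:m-step}---vanish; equivalently, that for these cyclic modules $m$-step linearity for one bounded value of $m$ already forces Koszulness. This is exactly where the hypothesis $q=e-1$ is indispensable: for larger $q$ the analogous Koszul loci are expected to be only dense, not open, which is why Theorem~\ref{i2} promises a non-empty interior only in this range.
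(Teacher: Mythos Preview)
Your reduction to $p=1$ and $q=e-1$ via Theorem~\ref{thm:functorial} is correct and coincides with the paper's opening move. The difficulty is everything after that.

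The central claim you need, that $U=\LL^1_{1,e-1}(R)\subseteq\LL_{1,e-1}(R)$, is not established in your proposal, and the ``induction'' you sketch does not run. Passing from $M=R/(w)$ to $\Omega^1_R(M)\cong R/(0{:}w)(-1)$ replaces a module with Hilbert series $1+(e-1)s$ by one with Hilbert series $1+rs$; but $r$ is a constant of the ring, so the next step produces another cyclic module of the same shape, and nothing decreases. Worse, even the first step already requires a new hypothesis: for $R/(w)$ to be $2$-step linear one needs $(0{:}w)$ to be generated in degree~$1$, i.e.\ $R_1\cdot(0{:}w)_1=R_2$, and this does not follow from $wR_1=R_2$. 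In the case $r=e-1$ your appeal to Proposition~\ref{prop:per} gives only $\LL_{1,e-1}(R)=\LL^2_{1,e-1}(R)$, not $\LL^1$, so it does not close the gap either (it does, however, suffice by itself to show $\LL_{1,e-1}(R)$ is open and non-empty when $r=e-1$, which is what the paper actually uses).

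The paper avoids this entirely. It first settles $r=e-1$ directly from Propositions~\ref{prop:per} and~\ref{prop:(e-1)p}, then for $r<e-1$ it does \emph{not} work inside $R$ at all: it builds an auxiliary algebra $R'=k[x_1,\dots,x_e]/I'$, where $I'$ keeps the relations~\eqref{eq:quadrics} and $x_e^2$ but drops the extra monomials~\eqref{eq:monomials}, so that $\hilb{R'}s=(1+s)(1+(e-1)s)$ and $x_e$ is still a Conca generator of $R'$. The surjection $R'\to R$ is Golod, whence $\LL_{p,q}(R)\supseteq\LL_{p,q}(R')$ by Proposition~\ref{prop:homs}, and the already-proved case $r'=e-1$ applied to $R'$ finishes the argument. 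The missing idea in your approach is precisely this change of rings: rather than trying to propagate linearity along the syzygies of a single module over $R$, one transports an open set of linear modules wholesale from a better-behaved algebra.
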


 \begin{proof}
Assume first $r=e-1$.  Now $R$ is Koszul, see \ref{ch:conca2}, so
$\LL_{1,e-1}(R)$ is open and non-empty by Propositions \ref{prop:e-1}
and \ref{prop:(e-1)p}.  Using Theorem \ref{thm:functorial} we extend
this conclusion first to $\LL_{p,e-1}(R)$ for arbitrary $p$, then to
all $\LL_{p,q}(R)$ with $q\le e-1$.

Assume next $r<e-1$.  Set $R'=k[x_1,\dots, x_e]/I'$ where $I'$ is
generated by $x_e^2$ and the polynomials in \eqref{eq:quadrics}. One has
$\hilb{R'}s=(1+(e-1)s)\cdot(1+s)$ and the class of $x_e$ in $R'$
is a Conca generator, so $R'$ is Koszul; see \ref{ch:conca2}.  The  map $R'\to R$
is Golod; see the proof of \cite[3.2]{AIS}.  In $\mat q{ep}$ this gives
$\LL_{p,q}(R)\supseteq\LL_{p,q}(R')$, by Proposition \ref{prop:homs},
and $\LL_{p,q}(R')$ has a non-empty interior by the case already settled.
 \end{proof}

\begin{proposition}
\label{prop:(e-r)p}
Let $R$ be a standard graded algebra with a Conca generator.

If\,\ $p,q\in\mathbb{N}$ satisfy $q\leq (e-r)p$, then $\LL_{p,q}(R)$
contains the non-empty open set $\LL^1_{p,q}(R)\cap\mat q{ep}(\bss)$,
where $\bss$ consists of the $q$ largest elements of\,\
$[1,e]\times[1,p]$.
 \end{proposition}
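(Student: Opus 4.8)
The plan is to prove the stronger inclusion $\mat q{ep}(\bss)\subseteq\LL_{p,q}(R)$. Since $\LL_{p,q}(R)\subseteq\LL^1_{p,q}(R)$ and $\mat q{ep}(\bss)$ is non-empty whenever $q\le ep$, this yields at once the stated inclusion and the non-emptiness of $\LL^1_{p,q}(R)\cap\mat q{ep}(\bss)$; openness of the latter follows from Lemma~\ref{lem:n-open} (applicable since $R$ is Koszul, by \ref{ch:conca2}) together with \eqref{eq:basic_open}. I fix the presentation \eqref{eq:monomials}--\eqref{eq:quadrics} with Conca generator $x_e$, so that $R_3=0$, the products $x_1x_e,\dots,x_rx_e$ form a $k$-basis of $R_2$, and $x_lx_e=0$ for $l\ge r+1$. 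The hypothesis $q\le(e-r)p$ enters only as the inclusion $\bss\subseteq\{(l,n)\mid l\ge r+1\}$, which holds because the rows with $l\ge r+1$ are exactly the $(e-r)p$ largest elements of $[1,e]\times[1,p]$.

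Set $M=R^C$. As $M$ is generated in degree $0$ (its table has rank $q$), it is Koszul as soon as $\Omega^R_i(M)$ is generated in degree $i$ for every $i\ge0$, and I will propagate this property one syzygy at a time. The engine is the following observation: if $P$ is a short $R$-module generated in degree $0$ whose table has its $\{l\ge r+1\}$-rows already spanning $P_1$, then the action of $x_e$ from $\Omega^R_1(P)_1$ to $\Omega^R_1(P)_2$ is surjective. Indeed $\Omega^R_1(P)$ lives in degrees $1$ and $2$ — its degree-$2$ part is $R_2\otimes_kP_0$ because $P_{\ge2}=0$, and $R_3=0$ kills degree $3$ — and for an element $\zeta=\sum_{l,n}\zeta_{l,n}\,x_l\otimes p_n$ of the kernel of the multiplication $R_1\otimes_kP_0\to P_1$, viewed inside the free cover $R\otimes_kP_0$, the relations $x_lx_e=0$ $(l\ge r+1)$ give
\[
x_e\cdot\zeta=\sum_{l=1}^{r}\sum_n\zeta_{l,n}\,(x_lx_e)\otimes p_n\,.
\]
Since $\{x_lx_e\mid l\le r\}$ is a basis of $R_2$, this exhibits the $x_e$-action as the projection of that kernel onto its $\{l\le r\}$-coordinates, and the projection is onto precisely because the $\{l\ge r+1\}$-rows of the table of $P$ span $P_1$, so any prescribed $\{l\le r\}$-part extends to a syzygy.

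This surjectivity feeds the induction. On one hand $R_1\cdot\Omega^R_1(P)_1\supseteq x_e\,\Omega^R_1(P)_1=\Omega^R_1(P)_2$, so $\Omega^R_1(P)(1)$ is short and generated in degree $0$ — that is, $\Omega^R_1(P)$ is generated in degree $1$. On the other hand the $x_e$-rows of the table of $\Omega^R_1(P)(1)$ now span its degree-$1$ part, and $e\ge r+1$, so $\Omega^R_1(P)(1)$ again satisfies the spanning hypothesis. Taking $P=M$, whose table meets the hypothesis because $\bss\subseteq\{l\ge r+1\}$ and $C_\bss$ is invertible, and iterating, I conclude that $\Omega^R_i(M)$ is generated in degree $i$ for every $i\ge0$; hence $M$ is Koszul.

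The delicate part — the point I would write out most carefully — is the bookkeeping that keeps the iteration inside the hypotheses of the observation: that each $\Omega^R_i(M)(i)$ is genuinely short (resting on $R_3=0$ together with the degree-$0$ generation inherited from the previous stage) and that the spanning property is transmitted (resting on $x_e$ acting surjectively, hence its rows spanning, at the new stage). Granting this, the relations $x_lx_e=0$ for $l\ge r+1$ do the essential work, converting multiplication by the Conca generator into the coordinate projection whose surjectivity is exactly the spanning condition. I note that the argument uses only $C\in\mat q{ep}(\bss)$: the condition $C\in\LL^1_{p,q}(R)$ is automatic here, since the $\bss$-hypothesis already forces the first — and thereby every — syzygy to be linear.
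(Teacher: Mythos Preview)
Your argument is correct and takes a genuinely different route from the paper's proof. The paper shows that $M=R^C$ is Koszul by passing to the trivial extension $S=R\ltimes M$: it writes down an explicit quadratic presentation of $S$ using the linear presentation of $M$ (this is where the hypothesis $C\in\LL^1_{p,q}(R)$ enters), then checks via a Gr\"obner basis computation in the reverse degree-lexicographic order that the initial ideal is monomial quadratic, and invokes Proposition~\ref{koszul-rings-modules} to conclude. You instead argue internally, propagating through the syzygies the condition ``the rows indexed by $l\ge r+1$ span the top degree'': the identity $x_e\cdot\zeta=\sum_{l\le r}\sum_n\zeta_{l,n}\,(x_lx_e)\otimes p_n$ on $\Omega^R_1(P)_1$ shows that multiplication by the Conca generator surjects onto $\Omega^R_1(P)_2=R_2\otimes_kP_0$, so each syzygy is again short, generated in a single degree, and satisfies the spanning hypothesis (now witnessed by the $x_e$-row alone, which suffices since $e\ge r+1$). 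This is closer in spirit to the arguments of \cite{AIS} than to the Gr\"obner basis technique, and it is more elementary in that it avoids both the trivial extension criterion and any initial-ideal computation. It also yields slightly more: your proof never uses $C\in\LL^1_{p,q}(R)$, so in fact $\mat q{ep}(\bss)\subseteq\LL_{p,q}(R)$ and the intersection in the statement equals $\mat q{ep}(\bss)$; the paper's approach needs the $1$-step linearity a priori to know that the displayed map is a presentation of $M$.
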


\begin{proof}
Both $\mat q{ep}(\bss)$ and $\LL^1_{p,q}(R)$ are open and nonempty, see
\ref{ch:parameters} and Lemma \ref{lem:n-open}, so their intersection
in the affine space $\mat q{ep}$ has the same properties.  It remains
to prove that each $R$-module $M=R^C$ in this intersection is Koszul.

By definition, the rows of the matrix $C\in\mat q{ep}(\bss)$ 
form a basis of the row space of\,\ $C$, so they determine
elements $d_{(l,n),(l',n)}\in k$, such that
  \begin{equation*}
 \label{relations}
C_{(l,n)}=\sum_{(l',n')\in\bss} d_{(l,n),(l',n')}C_{(l',n')}
  \quad\text{for all}\quad (l,n)\in [1,e]\times[1,p]\,.
  \end{equation*}
In degrees $0$ and $1$ the following sequence of graded $R$-modules
  \begin{align*}
&\hphantom{R\otimes_k\bigoplus_{n\in[1,p]}ku}
1\otimes u_n \longmapsto u_n
  \\
R\otimes_k\bigoplus_{(l,n)\notin\bss}(kx_l)\otimes_k(ku_n)
&\lra R\otimes_k\bigoplus_{n\in[1,p]}ku_n
\lra M\lra 0
  \\
1\otimes x_l\otimes u_n &\longmapsto
x_l\otimes u_n-\sum_{(l',n')\in \bss}d_{(l,n),(l',n')}(x_{l'}\otimes u_{n'})
  \end{align*}
is exact by construction, so it is exact because $M$ has a linear 
presentation.

The presentation of\,\ $M$ described above yields one for $S=R\ltimes M$,
in the form
  \[
S\cong k[x_1,\dots x_e,y_1,\dots y_p]/J\,,
  \]
where $J$ is generated by the polynomials in \eqref{eq:monomials},
\eqref{eq:quadrics}, and by those below:
  \begin{alignat}{3}
    \label{eq:new}
&y_ny_{n'}
  &&\quad\text{for}\quad &&1\le n,{n'}\le p\,,
    \\ \label{eq:new2}
&x_ly_{n}-\sum_{(l',n')\in \bss}d_{(l,n),(l',n')}x_ly_n
  &&\quad\text{for}\quad &&(l,n)\notin \bss\,.
 \end{alignat}
For monomials in $x_1,\dots,y_p$ we use the reverse degree-lexicographic
order with
  \[
y_1>\dots >y_p>x_1>\dots >x_e\,.
  \]
Thus, $x_ly_n> x_{l'}y_{n'}$ is equivalent to $(l,n)<(l',n')$, so the
choice of\,\ $\bss$ implies:
  \[
x_ly_n>x_{l'}y_{n'}\quad\text{holds when}\quad (l,n)\notin \bss
\quad\text{and}\quad (l',n')\in \bss\,.
  \]
For the chosen generators of\,\ $J$, this gives the following list of
initial terms:
  \begin{xalignat*}{10}
\qquad &&&x_lx_{l'}
  &&\text{for} &&1\le l\le {l'}\le e-1\,,
\qquad&&x_ly_n
  &&\text{for} &&(l,n)\notin \bss\,,&&& \\
\qquad &&&x_{l}x_e
  &&\text{for} &&r+1\le l\le e\,,
&&y_ny_{n'}
  &&\text{for} &&1\le n,{n'}\le p\,.&&&
   \end{xalignat*}

Set $T=k[x_1,\dots,x_e, y_1,\dots, y_p]/L$, where $L$ denotes the ideal
generated by the monomials listed above.  We claim that $L$ contains
all monomials of degree~$3$.  Only those of the form $x_lx_{l'}y_n$ with
$l\le l'$ need attention.  Unless $l\le r$ and $l'=e$ hold, $x_lx_{l'}$
is in $L$, hence so is $x_lx_{l'}y_n$. For $l\le r$ the hypothesis $q\le
(e-r)p$ and the choice of\,\ $\bss$ imply $(l,e)\notin\bss$, so $x_ly_n$
is $L$, hence so is $x_lx_ey_n$.

We just proved that $T_3$ is zero.  Counting non-zero monomials in $T$, we
get $\hilb Ts=1+(e+p)s+(e-1+q)s^2$.  This gives $\hilb Ts=\hilb Ss$, which
implies that the polynomials in \eqref{eq:monomials}, \eqref{eq:quadrics},
and (\ref{eq:new}), (\ref{eq:new2}) form a Gr\"obner basis for $S$.
It follows that $S$ is Koszul, see \cite[\S4]{Fr0}, hence so is $M$,
by Proposition \ref{koszul-rings-modules}.
 \end{proof}

\section{Open sets of linear modules}
  \label{sec:loci}

As before, $R$ denotes a standard graded $k$-algebra, and we set
  \[
e=\rank_kR_1 \quad\text{and}\quad r=\rank_kR_2\,.
  \]
We assume $e\ge1$, and let $p\ge1$ and $q\ge0$ denote integers.  

Here our goal is to record various instances when the linear locus
$\LL_{p,q}(R)$ is open and non-empty.  The minimal admissible values of\,\
$r$ and $e$ are easily disposed of:

\begin{chunk}
\label{ch:e1}
If $e=1$ and $r=1$, then $\LL_{p,q}(R)=\varnothing$ for all $p$ and $q$.

Indeed, these conditions imply $R\cong k[x]$ or $R\cong k[x]/(x^n)$
for some $n\ge3$. 
  \end{chunk}

\begin{chunk}
\label{ch:r0}
When $e\ge1$ and $r=0$ $R$ is Koszul, $\LL_{p,q}(R)=\LL^0_{p,q}(R)$,
and this set is open in $\mat q{ep}$ for all $p$ and $q$; one has
$\LL_{p,q}(R)\ne\varnothing$ if and only if\,\ $q\leq ep$.
  \end{chunk}

In view of the preceding remarks, we henceforth focus on the case $e\ge2$.

The proofs of the next two propositions draw on most results in the paper.

  \begin{proposition}
    \label{prop:gor}
If\,\ $e\ge2$ and $R$ is short and Gorenstein, then it is Koszul.

When $e=2$, for each pair $(p,q)$ one has $\LL_{p,q}(R)=\LL^{q-1}_{p,q}(R)$,
this set is open in $\mat q{ep}$, and $\LL_{p,q}(R)\ne\varnothing$ if 
and only if\,\ $q\le p$.

When $e\ge3$, for each pair $(p,q)$ one has $\LL_{p,q}(R)=\LL^{m}_{p,q}(R)$
for some $m$, and this set is open in $\mat q{ep}$; if there exists an non-zero element
$x\in R_1$ with $x^2=0$ (in particular, if $k$ is algebraically closed), then
$\LL_{p,q}(R)\ne\varnothing$ for $q\leq (e-1)p$.
  \end{proposition}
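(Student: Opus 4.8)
The plan is to establish the four assertions in order, leaning on the machinery already assembled.

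\emph{Koszulness.} Since $R$ is Gorenstein with $e\ge2$ one has $R_2\ne0$, and $R_3=0$ gives $R_2\subseteq\Soc R$, so $\Soc R=R_2$ is one dimensional. Hence $r=\rank_kR_2=1$ and $\hilb Rs=1+es+s^2$; moreover no nonzero $x\in R_1$ lies in $\Soc R$, so $R_1x\ne0$, and as $R_1x\subseteq R_2$ is one dimensional we get $xR_1=R_2$ for every nonzero $x\in R_1$. I would then produce a Conca generator, that is, such an $x$ with $x^2=0$: over $\bar k$ the locus $\{x^2=0\}$ is a nonempty hypersurface in $\mathbb P(R_1)$ because $e\ge2$ (and if $x\mapsto x^2$ vanishes identically, any nonzero $x$ already serves over $k$). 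By \ref{ch:conca2} the algebra $R\otimes_k\bar k$ is Koszul; since graded Betti numbers are unchanged under the flat extension $k\to\bar k$, the criterion \eqref{eq:koszul} descends and $R$ itself is Koszul.

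\emph{Openness and the identities $\LL_{p,q}(R)=\LL^{m}_{p,q}(R)$.} Because $R$ is short and Koszul, Proposition \ref{lem:m-step} reduces $m$-step linearity of $R^C$ to the single vanishing $\gb{m}{m+1}{R}{R^C}=0$; thus each $\LL^m_{p,q}(R)$ is open by Lemma \ref{lem:n-open}, and $\LL_{p,q}(R)=\bigcap_{m}\LL^m_{p,q}(R)$ by \eqref{eq:intersection}. The exact sequence of $\mathcal E$-modules in the proof of Proposition \ref{lem:m-step} identifies the obstruction to linearity with a \emph{free} $\mathcal E$-module $\mathcal F\subseteq\mathcal E\otimes_k(M_1)^*$ satisfying $\gb{i}{i+1}{R}{R^C}=\rank_k\mathcal F^{i+1,i+1}$. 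As $\beta^R_j(k)>0$ for every $j$, if $\mathcal F\ne0$ with least generator degree $d$, then $\gb{i}{i+1}{R}{R^C}>0$ for all $i\ge d-1$: once linearity fails it never recovers. Consequently it suffices to bound $d$ uniformly over $\mat q{ep}$; any bound $d\le m+1$ yields $\LL_{p,q}(R)=\LL^{m}_{p,q}(R)$, hence openness.

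\emph{The uniform bound.} Establishing this is the main obstacle, and it is where the Gorenstein hypothesis is essential. Being Artinian Gorenstein, $R$ is self-injective, so $(-)^\vee=\Hom_R(-,R)$ is an exact duality; dualizing a minimal free resolution of $M=R^C$ produces, read backwards, the minimal resolution of $M^\vee$, whose two Hilbert coefficients are those of $M$ interchanged. I would exploit this symmetry to bound the generator degrees of $\mathcal F$ in terms of $p$ and $q$. When $e=2$ the algebra $R$ is a complete intersection of two quadrics, so syzygies are eventually controlled by the two Eisenbud operators; tracking ranks through the duality pins the first possible failure of linearity at step $q-1$, giving the sharp identity $\LL_{p,q}(R)=\LL^{q-1}_{p,q}(R)$. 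For $e\ge3$ the same duality argument yields some finite $m$, which is all that is claimed. I expect the delicate point to be making the bound uniform in $C$ rather than merely finite for each module, and, for $e=2$, sharp.

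\emph{Nonemptiness.} The necessary direction is immediate from Corollary \ref{n-injective}: with $\hilb Rs=1+es+s^2$ one has $v=1$ when $e=2$, so $\LL_{p,q}(R)\ne\varnothing$ forces $q\le p$. For sufficiency when $e=2$, write $R\cong Q/(g)$ with $Q=k[x,y]/(f)$ a quadratic hypersurface, $\hilb Qs=(1+s)(1-s)^{-1}$, and $g\in Q_2$ a nonzerodivisor; as $Q_1$ contains a nonzerodivisor $h$ (a suitable coordinate), the module $(Q/hQ)^p$ lies in $\LL^2_{p,p}(Q)\subseteq\LL_{p,p}(R)$ by Corollary \ref{cor:per}, and applying the projection $\pi_*$ of Lemma \ref{lem:functorial} transports nonemptiness down to every $q\le p$. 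For $e\ge3$, a nonzero $x\in R_1$ with $x^2=0$ is automatically a Conca generator (here $r=1$ and $x\notin\Soc R$, so $xR_1=R_2$), whence $\LL_{p,q}(R)\ne\varnothing$ for $q\le(e-1)p$ by Proposition \ref{prop:(e-1)p}; such an $x$ exists whenever $k$ is algebraically closed, exactly as in the first paragraph.
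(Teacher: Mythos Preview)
Your treatment of Koszulness and of nonemptiness is essentially the paper's: passing to $\bar k$ to find a Conca generator, descending along the flat extension, and for nonemptiness using Corollary~\ref{cor:per} with a nonzerodivisor in $Q_1$ (for $e=2$) and Proposition~\ref{prop:(e-1)p} (for $e\ge3$).

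The genuine gap is in your ``uniform bound'' step. You correctly reduce the identity $\LL_{p,q}(R)=\LL^{m}_{p,q}(R)$ to bounding, uniformly in $C$, the first place where linearity of $R^C$ fails; but you do not actually produce such a bound. Your proposed route via $R$-duality and, for $e=2$, Eisenbud operators remains a wish: duality interchanges the two Hilbert coefficients of $M$ and $M^\vee$, but it is not clear how that alone caps the generator degrees of $\mathcal F$, and you say as much (``I would exploit\dots'', ``I expect the delicate point\dots'').

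The paper closes this gap with a structural input you do not invoke: a classification of the indecomposable non-Koszul short modules over a short Gorenstein Koszul algebra, taken from \cite[4.6]{AIS}. Setting $b_i=\beta_i^R(k)$ and $M_{(i)}=\Hom_R(\Omega^i_R(k),R)(1-i)$, one has $\hilb{M_{(i)}}s=b_{i-1}+b_is$, and every indecomposable non-Koszul short module is isomorphic to some $M_{(i)}$ with $i\ge1$. Dualizing a truncation of the minimal resolution of $k$ shows $M_{(i)}$ is $(i-1)$-step but not $i$-step linear. Since $b_i$ is strictly increasing (with $b_i=i+1$ when $e=2$), choose $m$ so that $b_i>q$ for $i>m$; for $e=2$ this is $m=q-1$. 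If some $M\in\LL^m_{p,q}(R)$ were not Koszul, an indecomposable non-Koszul summand $M_{(i)}$ would be $m$-step linear, forcing $i>m$ and hence $b_i>q$; but $(M_{(i)})_1\subseteq M_1\cong k^q$ gives $b_i\le q$, a contradiction. This is exactly the uniform bound you were after, and it is what makes the $e=2$ value $m=q-1$ sharp.
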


  \begin{proof}
For a proof that $R$ is Koszul see, for instance, \cite[2.7]{CRV}, or 
\cite[4.1]{AIS}. 

The set $\LL_{p,q}^m(R)$ is open in $\mat q{ep}$ for all $m$, $p$,
and $q$, see Lemma~\ref{lem:n-open}.  In particular, the openness of
$\LL_{p,q}(R)$ follows from the other assertions.

For each $i\in\BZ$, set $b_{i} = \beta_{i}^{R}(k)$ and
$M_{(i)}=\Hom_R(\Omega^{i}_{R}(k),R)(1-i)$.  One has:
   \begin{enumerate}[\quad\rm(1)]
 \item
$b_{i}> b_{i-1}$ for every $i\ge0$; moreover, $b_{i}=i+1$ when $e=2$.
\item $\hilb{M_{(i)}}s=b_{i-1} + b_{i}s$.
 \item
If\,\ $N$ is an indecomposable non-Koszul module, then $N\cong M_{(i)}$
for some~$i\ge1$.
  \end{enumerate}
Indeed, the recurrence relation $b_{i+1}=eb_{i}-b_{i-1}$, for $i\geq 2$,
given by \eqref{eq:koszul}, implies (1).  Parts (2) and (3) are contained
in \cite[4.6]{AIS}(2).  Next we prove:
  \begin{enumerate}[\quad\rm(1)]
 \item[\rm(4)] $M_{(i)}$
is $(i-1)$-step linear, but not $i$-step linear.
  \end{enumerate}

Indeed, since $R$ is Koszul one obtains an exact sequence
 \[
0\to\Omega^{i}_{R}(k)\to R(-i+1)^{b_{i-1}}\to\cdots\to R(-1)^{b_{1}}
\to R \xra{\varepsilon} k\to0\,,
 \]
from a minimal free resolution of\,\ $k$ over $R$.   Now $\Hom_{R}(-,R)$
is exact because $R$ is Gorenstein, and $\Hom_R(k,R)\cong k(-2)$ as
$R_{2}\cong k$, so we get an exact sequence
 \[
0\to k(-2)\xra{\eta} R\to R(1)^{b_{1}}\to\cdots\to R(i-1)^{b_{i-1}}
\to \Hom_R(\Omega^{i}_{R}(k),R)\to0
 \]
It gives for $M_{(i)}$ a minimal free resolution depicted below, which
proves (4):
 \[
\cdots\to R(-i-1)\to R(-i+1)\to R(-i+2)^{b_{1}}\to\cdots\to R^{b_{i-1}}
\to 0
 \]

Choose now, by (1), an integer $m$ so that $b_{i}> q$ holds for $i>m$;
by the same token, pick $m=q-1$ when $e=2$.  If\,\ $\LL^{m}_{p,q}(R)$
contains a module $M$ that is not Koszul, then some indecomposable direct
summand $N$ of\,\ $M$ is not Koszul. By (3), we have $N\cong M_{(i)}$
for some $i\geq 1$, so $M_{(i)}$ is $m$-step linear.  Now (4) implies
$i> c$, hence $b_{i}>q$ by the choice of\,\ $m$. On the other hand, for
the submodule $M_{(i)}$ of\,\ $M$ we get  $b_{i} \leq q$ from (2). The
contradiction implies $\LL_{p,q}(R)=\LL^{m}_{p,q}(R)$, as desired.

When $e=2$, Corollary~\ref{n-injective} gives $\LL_{p,q}(R)=\varnothing$
when $q>p$.  Thus, we assume $q\leq p$ and set out to prove 
$\LL_{p,q}(R)\ne\varnothing$.  In view of Lemma~\ref{lem:functorial}, 
we may restrict to the case $q=p$.  Since $R$ is
Koszul, we have $R\cong k[x,y]/(f,g)$ with $f,g$ a regular 
sequence in $Q_2$.  Set $Q=k[x,y]/(f)$.  Corollary~\ref{cor:per}
shows that it suffices to exhibit a non-zero-divisor $h\in Q_1$.  If $f$ 
is irreducible, then $Q$ is an integral domain; take $h=x$.  Else, 
$f$ is a product of two linear forms, so after a change of variables
we may assume $(f)=(x^2)$ or $(f)=(xy)$; in either case, pick $h=x+y$.

Finally, as $R$ is Gorenstein every non-zero element $x\in R_1$ with 
$x^2=0$ evidently is a Conca generator; such an $x$ exists when $k$ 
is algebraically closed, see for instance,  \cite[Lem.\,3]{Co}.  Now
Proposition~\ref{prop:(e-1)p} gives $\LL_{p,q}(R)\ne \varnothing$ 
for $q\leq (e-1)p$.
 \end{proof}

  \begin{proposition}
    \label{prop:per3}
Assume that $R$ is quadratic, with $\hilb Rs=1+es+(e-1)s^2$. 

If $e=2$, or if $e=3$ and $k$ is infinite, then the following hold.
  \begin{enumerate}[\rm(1)]
    \item
There is an isomorphism $R\cong Q/(g)$ for a Koszul $k$-algebra $Q$
and a non-zero-divisor $g\in Q_2$; in particular, $R$ is Koszul.
    \item
For every positive integer $p$ one has $\LL_{p,(e-1)p}(R)=\LP_{p,(e-1)p}^2(R)$,
this set is open in $\mat{(e-1)p}{ep}$, and is not empty.
  \end{enumerate}
 \end{proposition}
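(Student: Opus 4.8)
The plan is to reduce both parts to part (1) and then feed the decomposition into the machinery already in place. Once I have $R\cong Q/(g)$ with $Q$ Koszul and $g\in Q_2$ a non-zero-divisor, Corollary~\ref{cor:per} does the rest. For $e=3$ it gives the chain $\LL_{p,(e-1)p}(R)=\LP^2_{p,(e-1)p}(R)=\LL_{p,(e-1)p}(Q)=\LL^2_{p,(e-1)p}(Q)$ (the middle inclusion is an equality because $e\ne2$), the last set being open by the corollary and non-empty because $Q_1$ contains a non-zero-divisor when $k$ is infinite. For $e=2$ the same corollary yields $\LL_{p,p}(R)=\LP^2_{p,p}(R)$, while openness and non-emptiness come instead from Proposition~\ref{prop:gor}, which applies since a quadratic algebra with $\hilb Rs=1+2s+s^2$ is a complete intersection, hence Gorenstein. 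So everything rests on constructing the decomposition in~(1).

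For $e=2$ this is immediate: $\dim_kI_2=3-1=2$, so $R=k[x,y]/(f,g)$ with $f,g$ quadrics, and since $R$ is Artinian, $f,g$ is a system of parameters in the Cohen--Macaulay ring $k[x,y]$, hence a regular sequence. Thus $Q=k[x,y]/(f)$ is a hypersurface, Koszul by~\ref{ch:square}, and the image $g\in Q_2$ is a non-zero-divisor with $R=Q/(g)$.

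The case $e=3$ is the heart of the matter, with $R=k[x,y,z]/I$, $\dim_kI_2=4$, and $R_3=0$. I would first observe that such an $R$ has no linear socle (a socle element $z\in R_1$ would force $R_3\ne0$), so $R$ is level and its Macaulay inverse system is a pencil $W_2$ of conics in the dual variables, with $I_2=W_2^\perp$. The goal is a $3$-dimensional $J_2\subseteq I_2$ whose ideal cuts out a one-dimensional Cohen--Macaulay algebra $Q=k[x,y,z]/(J_2)$ of multiplicity $3$; granting this, $\hilb Qs=(1+2s)/(1-s)$, so for any $\tilde g\in I_2\setminus J_2$ the comparison $\hilb Rs=(1-s^2)\hilb Qs$ forces its image $g\in Q_2$ to be a non-zero-divisor, and $R=Q/(g)$ because $I=(I_2)=(J_2\oplus k\tilde g)$. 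To locate $J_2$ I would diagonalize the pencil: a regular pencil is simultaneously diagonalizable over $\bar k$, so $W_2\subseteq\langle X^2,Y^2,Z^2\rangle$, where the three coordinate points are the vertices of the common self-polar triangle; even if they are only defined over a cubic extension, their Galois orbit is $k$-rational, so $J_2:=\langle X^2,Y^2,Z^2\rangle^\perp\subseteq I_2$ descends to $k$. Then $Q$ is the homogeneous coordinate ring of a length-$3$ non-degenerate scheme, hence Cohen--Macaulay of dimension $1$ with the asserted Hilbert series. Finally $Q$ is Koszul: since $k$ is infinite and $\depth Q=1$, a general linear form $\ell$ is a non-zero-divisor, and $\hilb{Q/\ell Q}s=(1-s)\hilb Qs=1+2s$ forces $\mathfrak m^2=0$ in $Q/\ell Q$, an algebra that is patently Koszul, so~\ref{ch:square} lifts Koszulness back to $Q$.

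The main obstacle is the pencil $W_2$ when it fails to be regular. The classification of pencils of conics (Segre symbols with repeated roots or a Jordan block) produces self-polar data that is partly degenerate or non-reduced, and simultaneous diagonalization breaks down. I expect to dispose of these finitely many types by explicit normal forms, checking in each that a suitable $J_2\subseteq I_2$ still defines a one-dimensional Cohen--Macaulay scheme of multiplicity $3$ over $k$ (now possibly non-reduced), and that the resulting $Q$ stays Koszul through the same reduction modulo a linear non-zero-divisor. The delicate points on which the argument turns are verifying that \emph{every} admissible $R$ arises from such a pencil and that the descent to the non-closed infinite field $k$ goes through uniformly across all degeneration types.
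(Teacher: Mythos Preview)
Your reduction of part~(2) to part~(1) via Corollary~\ref{cor:per} and Proposition~\ref{prop:gor}, together with your treatment of the case $e=2$ in part~(1), is correct and coincides with the paper's argument.

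For $e=3$ your route is genuinely different. The paper, like you, first checks that $R$ has no linear socle (if $hR_1=0$ for some $0\ne h\in R_1$, then $R/(h)$ would be a quadratic algebra in two variables with Hilbert series $1+2s+2s^2$, which is impossible), so $R$ is an \emph{almost complete intersection of codimension~$3$ and type~$2$}. At this point the paper invokes the Buchsbaum--Eisenbud structure theorem \cite[5.3,\,5.4]{BE} for such ideals: a general-position argument, available because $k$ is infinite, produces quadric generators $f_1,\dots,f_4$ of $I$, any three of which form a regular sequence, and then the structure theorem yields $\{i_1,i_2,i_3\}\subset[1,4]$ and a quadric $f$ with $I=(f_{i_1},f_{i_2},f_{i_3},f)$ such that $f_{i_1}$ is a non-zero-divisor on $Q=k[x,y,z]/(f_{i_2},f_{i_3},f)$. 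This single citation handles all cases at once. Your apolarity picture is attractive in the generic case---$Q$ becomes the coordinate ring of the self-polar triangle of the pencil---and your verification that $Q$ is Koszul via reduction modulo a linear non-zero-divisor is cleaner than what the paper writes.

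The gap is exactly where you place it, and it is not a routine omission. You have not proved part~(1) for $e=3$: the degenerate pencils (fixed component, repeated base points, pencils of singular conics) are only ``expected'' to yield a suitable $J_2$, and your descent of the self-polar triangle via Galois orbits presumes separability that an arbitrary infinite $k$ need not provide. In characteristic~$2$ the very device of simultaneous diagonalization of quadratic forms breaks down, so even the regular-pencil step requires a different argument. Carrying out the case analysis you sketch would essentially reprove, by hand and in this special situation, the content that \cite[5.3,\,5.4]{BE} supplies uniformly; until that is done, the proof is incomplete.
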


 \begin{proof}
(1) When $e=2$ one has $R\cong k[x,y]/(f,g)$ with $f,g$ a regular sequence
of quadrics; take $Q=k[x,y]/(f)$.

When $e=3$, write $R\cong k[x,y,z]/I$ with $I$ an ideal minimally
generated by $4$ quadrics.  Assuming $hR_1=0$ for some $h\in R_1$ with
$h\ne0$, we get a quadratic algebra $S=R/(h)$ with $\hilb Ss=1+2s+2s^2$;
this is impossible, and so we get $(0:\fm)=R_2$.  Thus, $R$ is an
\emph{almost complete intersection of codimension $3$ and type~$2$}.
In the local case, such rings are described by a structure theorem of
Buchsbaum and Eisenbud, see \cite[5.4]{BE}.  This is a corollary of
\cite[5.3]{BE}, whose proof refers to a general position argument to
find generators $f_1,f_2,f_3,f_4$ of $I$, every $3$ of which form a
regular sequence.  The hypothesis that $k$ is infinite allows one to
find the $f_i$ as $k$-linear combinations of the original quadrics.
The rest of the proof of \cite[5.3]{BE} and that of \cite[5.4]{BE}
now yield $\{i_1,i_2,i_3\} \subset[1,4]$ and $f\in Q_2$, such that
$I=(f_{i_1},f_{i_2},f_{i_3},f)$ and $f_{i_1}$ is a non-zero-divisor on
the algebra $Q=k[x,y,z]/(f_{i_2},f_{i_3},f)$.

(2)  In view of (1), Corollary \ref{cor:per} applies.  It yields
$\LL_{p,(e-1)p}(R)=\LP_{p,(e-1)p}^2(R)$, shows that this set is open in
$\mat{(e-1)p}{ep}$, and also that it is non-empty when $e=3$.  When $e=2$,
we get $\LL_{p,p}(R)\ne\varnothing$ from Proposition \ref{prop:gor}.
  \end{proof}

To finish, we show that when $R$ needs more that $3$ generators,
there exists no similar description of the locus of modules 
with constant Betti numbers, and we isolate the smallest case, when
it is not known whether this set has an open interior.

 \begin{remark}
    \label{rem:per42}
Choose an element $a\in k\smallsetminus\{0,\pm1\}$ and set $R=k[x_1,x_2,x_3,x_4]/I$,
where $I$ is the ideal generated by the following quadratic forms in four
variables:
  \[
x^2_1\,,\quad ax_1x_3+x_2x_3\,,\quad x_1x_4+x_2x_4\,,\quad x^2_2\,,
\quad x^2_3\,,\quad x_3x_4\,,\quad x^2_4\,.
  \]

Since $x_4$ is a Conca generator for $R$, Proposition \ref{prop:(e-1)p}
shows that the sets  $\LL_{1,3}(R)$ and $\LP^2_{1,3}(R)$ are equal,
open, and non-empty.  On the other hand, \cite[3.4]{GP} and Proposition
\ref{prop:(e-1)p} give, respectively, the strict inclusion and the
inequality below:
  \[
\LL_{2,6}(R)\supsetneq\LP_{2,6}^2(R)\ne\varnothing\,.
  \]

We do not know whether either set above has a non-empty interior in
$\mat 6{8}$.
 \end{remark}

\end{document}